\theoremstyle{plain}
\newtheorem{theorem}{Theorem}[section]
\newtheorem{lemma}[theorem]{Lemma}
\newtheorem{prop}[theorem]{Proposition}
\newtheorem{cor}[theorem]{Corollary}
\theoremstyle{definition}
\newtheorem{definition}[theorem]{Definition}
\newtheorem{rem}[theorem]{Remark}
\newtheorem{ex}[theorem]{Example}
\newcommand\C{{\mathbb C}}
\newcommand\de{{\partial}}
\newcommand\debar{{\bar\partial}}
\newcommand\J{{\mathcal J}}
\newcommand\gen{{TM\oplus T^*M}}
\renewcommand\b{{\mathfrak b}}
\begin{document}

 \title[Tamed Symplectic forms]{Tamed Symplectic forms and Generalized Geometry}
\author{Nicola  Enrietti, Anna Fino and Gueo Grantcharov}
\date{\today}
\address{Dipartimento di Matematica \\ Universit\`a di Torino\\
Via Carlo Alberto 10\\
10123 Torino\\ Italy} \email{nicola.enrietti@unito.it, annamaria.fino@unito.it}
\address{Department of Mathematics\\
Florida International University\\
University Park\\
Miami, FL 33199\\
USA}
\email{grantchg@fiu.edu }
\subjclass[2000]{53D18, 53D05, 53D17}
\keywords{symplectic form, complex structure, tamed, generalized complex structure, twisted Poisson structure}
\thanks{This work was supported by the Projects MIUR ``Riemannian Metrics and Differentiable Manifolds'',
``Geometric Properties of Real and Complex Manifolds'' and by GNSAGA
of INdAM}
\begin{abstract}  We show that  symplectic  forms  taming complex structures on compact manifolds  are related to special types of almost generalized K\"ahler structures. By considering the commutator $Q$ of the two associated almost complex structures $J_{\pm}$,  we prove that if   either the manifold is $4$-dimensional or the distribution ${\mbox {Im}}  \, Q$ is involutive,   then the manifold can be expressed locally as a disjoint union of twisted Poisson leaves.
 \end{abstract}
\maketitle
\section{Introduction}

Let $(M,\Omega)$ be a compact  symplectic manifold of dimension $2n$. An almost complex structure $J$ on  $M$ is said to be \emph{tamed} by $\Omega$ if
$$
\Omega(JX,X)>0
$$
for any non-zero vector field $X$ on $M$.  When $J$ is a complex structure (i.e. $J$ is integrable) and $\Omega$ is tamed by $J$, the pair $(\Omega, J)$ has been also called a {\em Hermitian-symplectic structure} in \cite{ST}.
Although any symplectic structure always admits tamed almost complex structures,
it is still an open problem to find an example of a compact complex manifold admitting a
Hermitian-symplectic structure, but no K\"ahler structures. From \cite{ST,LiZhang} there exist no examples in dimension $4$.
Moreover, the study of    tamed  symplectic structures in dimension $4$ is related to a more general conjecture of Donaldson (see for instance  \cite{donaldson,weinkove,LiZhang}).

In  \cite{EFV}  it was shown that symplectic  forms  taming complex structures on compact manifolds
are strictly related to Hermitian metrics having the fundamental form $\partial \overline \partial $-closed (called  strong K\"ahler with torsion or simply ${\rm SKT}$ metrics) and some negative results on  compact quotients of Lie groups by discrete subgroups were also given. In particular,   if $M$ is   a nilmanifold  (not a torus)  endowed with an invariant complex structure $J$, then   $(M, J)$  does not admit any symplectic
form taming $J$ (\cite{EFV}).

The generalized complex structures  (\cite{gual-th},\cite{HitCY})  contain, as particular cases, the
complex and symplectic structures, so it is a natural  question to see if there is any geometrical interpretation of the tamed condition in terms of generalized complex structures.
There is an associated notion of almost generalized K\"ahler structure which consists of a pair of commuting almost generalized complex structures $(\J_1,\J_2)$ such that $\mathcal G = - {\mathcal J}_1{\mathcal J}_2$ is a positive definite metric on the bundle  $\gen$.  By \cite{gual-th} an almost generalized K\"ahler structure is equivalent to an almost bihermitian structure $(g,J_+,J_-)$ together with a 2-form $b$. The almost generalized K\"ahler structure $(\J_1,\J_2)$ is integrable if and only if $J_+,J_-$ are integrable and there is a 2-form $b$ such that
\[
J_+d\omega_+ = -J_-d\omega_- =  db,
\]
where $\omega_{\pm} ( \, \cdot, \, \cdot ) = g ( \cdot, J_{\pm} \cdot)$ denotes the fundamental form associated to the almost Hermitian structure $(J_{\pm}, g)$ and we use the convention
$$
J_{\pm} d \omega_{\pm} ( \, \cdot, \, \cdot , \, \cdot) = d \omega (  J_{\pm}  \, \cdot,  J_{\pm} \, \cdot ,  J_{\pm} \, \cdot).
$$

This bihermitian structure appeared in the physics literature as long ago as 1984  \cite{GHR} as a target space for the supersymmetric  $\sigma$-model.

In this paper we consider an almost bihermitian structure defined naturally by tamed symplectic form. Let $(M,J)$ be a complex manifold and let $\Omega$ be a symplectic form taming $J$. Then $J_-=- \Omega^{-1} J^* \Omega$ defines an almost complex structure, $g=\Omega^{1,1}J$ a Riemannian metric  compatible with $J$ and $b=-\Omega^{2,0+0,2}J$ a 2-form. In Section \ref{sec:genSym} we will prove that the bihermitian structure $(g,J_+=J,J_-,b)$ is equivalent to an almost generalized K\"ahler structure $(\J_1,\J_2)$ such that $\J_1=\J_\Omega$, i.e. is the generalized complex structure induced by the symplectic form $\Omega$ (a similar result was proven in \cite{OP} when $J_-$ is integrable). An almost generalized K\"ahler structure defined in this way will be call \emph{tamed}.

Moreover, we will determine an explicit  relation between the torsion $3$-form of the Bismut connection associated to $(J_+, g)$ and  the $3$-form $J_- d \omega_-$, where $\omega_-$ denotes the  fundamental $2$-form associated to the almost Hermitian structure $(J_-, g)$.

The second part of the paper is devoted to the study of Poisson and twisted Poisson structures on tamed almost generalized K\"ahler structure. By \cite{Hitchin} and \cite{gual-th} that there are strong relations between generalized K\"ahler and Poisson structures. Indeed, given a generalized K\"ahler structure $(J_{\pm}, g, b)$  if we consider the $(2, 0)+(0, 2)$-form with respect to both $J_{\pm}$ given by
$$
S(X, Y ) = g( Q X, Y ),
$$
where $Q=[J_+,J_-]$, then its $(0, 2)$ part can be identified with a  holomorphic Poisson structure   for either complex structure $J_+$ or $J_-$ \cite{Hitchin}. Twisted Poisson manifolds considered by Severa and Weinstein \cite{SW}  are a natural generalization
of the  Poisson structures  which also arises in string theory.  A pair  $(\pi, \phi)$, where $\phi$ is a bivector field and $\phi$ is a closed 3-form, is called a \emph{twisted Poisson structure} if it satisfies the equation
$$
[\pi, \pi]_s = \frac{1}{2} \pi^{\#} \phi,
$$
where $[\, , \, ]_s$ denotes the Schouten-Nijenhuis bracket and $\pi^{\#}$  is the vector bundle homomorphism $T^* M \rightarrow  TM$ induced by $\pi$. In the case $\phi =0$ one recovers the
usual notions of Poisson tensor and Poisson manifold.

Let  $(M,g)$ be  a Riemannian manifold and $Q:TM\to TM$ a skew-symmetric endomorphism of the tangent bundle of $M$, i.e.  such that $g(QX,Y)=-g(X,QY)$, for every   $X$ and $Y$. In Section \ref{sec:twistedRiemann}  we show that  if  $Im (Q)$ is an involutive  distribution, then it is integrable to a generalized foliation, i.e. $M$  can be
expressed locally as a disjoint union of embedded submanifolds (the leaves) such that at any point
$p \in M$, the tangent space to the leaf through $p$ is precisely $Im(Q) (p)$.  Moreover, every leaf has  a twisted Poisson structure. If additionally the rank of $Q$ is constant, then $M$ itself carries a twisted Poisson structure.

In Section \ref{sec:twistedGK} we apply the previous results with $Q=[J_+,J_-]$ to investigate the existence of twisted Poisson structures on a tamed almost generalized K\"ahler structure when $J_-$ is not integrable. We prove in particular
  that if either the manifold is $4$-dimensional or the distribution $Im(Q)$ is involutive, then $Im(Q)$ induce a foliation on the $M$ such that on every leaf $L$ we have two twisted Poisson manifold induced by the projections on $L$ of the endomorphisms $Q$ and $J_+-J_-$.  Moreover, it turns out that   the $(2,0)$-part  $\tilde{Q}^{(2,0)}$ of the  associated bivector  $\tilde Q$  is holomorphic if and only if   the $(3,0)$ part  of $\Omega(X,N_-(Y,Z))$ vanishes. So in particular for  a complex surface  $(M, J)$ endowed   with a symplectic form $\Omega$  that tames $J$,  the $(2,0)$-part  $\tilde{Q}^{(2,0)}$ of the bivector  $\tilde Q$  is holomorphic.

\vspace{5mm}

\section{Generalized K\"ahler structures and symplectic structures}\label{sec:genSym}

Let $(M,J,g)$ be an Hermitian manifold and let  $\omega ( \, \cdot, \, \cdot ) = g ( \cdot, J  \cdot)$ be its fundamental 2-form. We say that the metric $g$ is \emph{strong K\"ahler with torsion} (or simply \emph{SKT}) if the fundamental 2-form $\omega$ is $\de\debar$-closed, i.e. $\de\debar\omega=0$. The study of SKT metrics is strictly related with Hermitian connections. Any Hermitian manifold $(M,J,g)$ admits a unique linear connection $\nabla^B$, called \emph{Bismut connection}, such that $\nabla^Bg=\nabla^BJ=0$ and the torsion 3-form $c(X,Y,Z)=g(X,T^B(Y,Z))$ is skew-symmetric \cite{Bismut, Gauduchon}. This connection is also called K\"ahler with torsion, or simply KT, and we say that it is \emph{strong} if $dc=0$. It is easy to prove that the torsion 3-form $c$ of $\nabla^B$ is a multiple of $Jd\omega$, so $dc=0$ if and only if  the $J$-Hermitian metric $g$ is SKT.

SKT structures are deeply connected with symplectic forms taming complex structures. We say that a symplectic form $\Omega$ on a complex manifold $(M,J)$ \emph{tames} $J$ if it satisfies
\[
 \Omega(JX,X)>0
\]
for every non-zero vector field $X$ on $M$. In \cite{EFV} it was proved that a symplectic form taming $J$ is equivalent to an SKT metric such that $\de\omega=\debar\beta$ for a $\de$-closed $(2,0)$-form $\beta$. In particular, this implies that the torsion 3-form of the Bismut connection $c$ is exact, since $c=d(\beta+\overline\beta)$.

SKT metrics with exact torsion 3-form have also been studied because of their relations with generalized complex and K\"ahler geometry \cite{gual-th}. A \emph{almost generalized complex structure} on $M$ is an endomorphism $\J$ of $\gen$ that satisfies  the condition $\J^2=-id$ and is orthogonal with respect to the natural inner product of $\gen$. The $+i$-eigenbundle $L$ of $\J$ is a maximal isotropic subbundle of $(\gen)\otimes \C$: if $L$ is closed under the Courant bracket, we say that $\J$ is \emph{integrable}.

A pair $(\J_1,\J_2)$ of commuting generalized (almost) complex structures such that $\mathcal G = - {\mathcal J}_1{\mathcal J}_2$ is a positive definite metric on the bundle  $\gen$ is called \emph{generalized (almost) K\" ahler structure}.

Any almost generalized K\"ahler structure induces a quadruple $(g, b, J_+, J_-)$, where $g$ is a Riemannian metric, $b$ is a $2$-form and $J_{\pm}$ are almost Hermitian structures on $(M, g)$. Conversely, for any quadruple $(g, b, J_+, J_-)$ we can define an almost generalized K\"ahler structure by
\begin{equation} \label{expressionJAB}
\begin{array}{l}
{\mathcal J}_1= \frac 12  \left(  \begin{array}{cc} 1&0\\ -b & 1\end{array} \right)  \left( \begin{array}{cc
} J_- - J_+ & \omega_-^{-1} + \omega_+^{-1}\\ -( \omega_- + \omega_+)& - (J_-^* -J_+^*) \end{array}  \right)   \left(  \begin{array}{cc} 1&0\\ b & 1\end{array} \right)\\[12 pt]
{\mathcal J}_2= \frac 12   \left(  \begin{array}{cc} 1&0\\-b
 & 1\end{array} \right) \left( \begin{array}{cc
} J_- + J_+ & \omega_-^{-1} - \omega_+^{-1}\\ -( \omega_- - \omega_+)& - (J_-^* + J_+^*) \end{array}  \right)   \left(  \begin{array}{cc} 1&0\\ b & 1\end{array} \right).\\
\end{array}
\end{equation}
Note that $\J_1,\J_2$ are integrable if and only if
\begin{itemize}
\item[i)] $J_+,J_-$ are integrable;
\item[ii)] $J_+d\omega_+=-J_-d\omega_-=  db$, i.e. $g$ is SKT with respect to $J_\pm$ and both the torsion 3-forms of the Bismut connections $\nabla^B_\pm$ are exact.
\end{itemize}
Note that a symplectic form $\Omega$ on a manifold $M$ induces the generalized complex structure
\[
\mathcal J_\Omega= \left(
\begin{array}{cc}
0 &\Omega^{-1} \\
-\Omega & 0
\end{array} \right),
\]
which is integrable since $d\Omega=0$. The following proposition allows us to see symplectic structures taming complex structures as particular cases of (almost) generalized K\"ahler structures.

\begin{prop}\label{main}
Let $(M,\Omega)$ be a symplectic manifold. Giving a almost generalized K\"ahler structure on $M$ such that one of the generalized complex structures is induced by $\Omega$ is equivalent to assigning an almost complex structure $J$ such that $\Omega$ tames $J$, i.e. $\Omega(JX,X)>0$ for any vector field $X \neq 0$.

Moreover, the almost generalized K\"ahler structure is integrable if and only if the almost complex structures $J$ and $-\Omega^{-1}J_+^*\Omega$ are integrable.
\end{prop}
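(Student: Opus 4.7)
The plan is to prove the two parts separately: first the correspondence between tamed almost complex structures and almost generalized K\"ahler structures with $\J_1 = \J_\Omega$, and then the integrability statement.

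For the forward direction, given $J$ tamed by $\Omega$, I would set $J_+ = J$ and $J_- = -\Omega^{-1}J^*\Omega$. A direct computation yields $J_-^2 = -\mathrm{id}$ and the intertwining identity
\[
\Omega(J_- X, Y) = -\Omega(X, J_+ Y).
\]
I then define $g(X,Y)$ as the symmetric part of $\Omega(J_+X, Y)$ and $b(X,Y)$ as the antisymmetric part. The taming condition gives $g(X,X) = \Omega(J_+X, X) > 0$, so $g$ is positive definite, and it is $J_+$-invariant by construction. Using the intertwining identity, a short computation shows that $g$ is also the symmetric part of $\Omega(J_-X, Y)$, so $g$ is $J_-$-invariant and $(g, J_+, J_-, b)$ is an almost bihermitian structure with a $2$-form.

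To verify that the induced $\J_1$ in formula (\ref{expressionJAB}) coincides with $\J_\Omega$, I conjugate $\J_\Omega$ by the $B$-transform of $b$ and compare with the central matrix of (\ref{expressionJAB}). Matching the four blocks reduces the question to the identities
\[
b = \tfrac{1}{2}\Omega(J_+ - J_-), \qquad g = \tfrac{1}{2}\Omega(J_+ + J_-)
\]
(as maps $TM \to T^*M$), the remaining two block identities then following by transposition from the antisymmetry of $\Omega$ and $b$. Both identities are immediate from the intertwining identity and the definitions. For the reverse direction, given an almost generalized K\"ahler structure with $\J_1 = \J_\Omega$, the same block identities hold; using $\omega_\pm^{-1} = J_\pm g^{-1}$ (a consequence of $g$ being $J_\pm$-compatible), they yield $\Omega J_+ = g + b$ and $\Omega J_- = g - b$. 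Evaluating on $(X,X)$ gives $\Omega(J_+X, X) = g(X,X) > 0$, so $\Omega$ tames $J = J_+$, and $J_- = -\Omega^{-1}J_+^*\Omega$ is recovered by transposition.

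For the integrability, $\J_\Omega$ is always integrable because $d\Omega = 0$, so the integrability of the almost generalized K\"ahler structure reduces to that of $\J_2$, which by the general theory is equivalent to the integrability of $J_+$ and $J_-$ together with the cohomological condition $J_+ d\omega_+ = -J_- d\omega_- = db$. From the identity $\Omega J_+ = g + b$ one reads off $\Omega = \omega_+ - b(J_+\cdot, \cdot)$, and symmetrically $\Omega = \omega_- + b(J_-\cdot, \cdot)$; in particular $b$ is of type $(2,0)+(0,2)$ with respect to each of $J_\pm$. Assuming $J_\pm$ integrable, I would decompose $d\Omega = 0$ into bidegrees with respect to $J_\pm$: the $(3,0)$ and $(0,3)$ components force Dolbeault closedness of $b^{2,0}$ and $b^{0,2}$, while the $(2,1)$ and $(1,2)$ components relate $\partial\omega_\pm$ to $\bar\partial b^{2,0}$ (and the conjugate relation), together producing the required cohomological equations. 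The main technical obstacle is the bookkeeping of bidegrees and signs in this last step; the other portions of the proof reduce to linear algebra and the definition of the $B$-transform.
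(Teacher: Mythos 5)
Your proposal is correct and follows essentially the same route as the paper: matching the blocks of $\J_\Omega$ against \eqref{expressionJAB} to extract $g=\tfrac12\Omega(J_++J_-)$ and $b$ as the skew part of $\Omega J_+$, reading off $J_-=-\Omega^{-1}J_+^*\Omega$ and the taming condition from positivity of $g$, and deducing the integrability condition $J_+d\omega_+=-J_-d\omega_-=db$ from the fact that $b_\pm^{2,0+0,2}=\mp i\,\Omega_\pm^{2,0}\pm i\,\Omega_\pm^{0,2}$ together with the bidegree decomposition of $d\Omega=0$. The only discrepancy is the overall sign of $b$ relative to the paper's convention in \eqref{eq:form}, which is immaterial to the argument.
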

\begin{proof}
We consider an almost generalized K\"ahler structure in the form $(\J_1=\J_\Omega,\J_2)$, and by imposing that $\mathcal J_1=\mathcal J_\Omega$ in equations \eqref{expressionJAB} we find the conditions on the bihermitian structure $(g,b,J_\pm)$. The $(1,2)$ element of the matrix representing ${\mathcal J}_1$ gives $-\frac12(\omega_-^{-1} + \omega_+^{-1})=\Omega^{-1}$, i.e.
\begin{equation}\label{eq:metric}
 g=\frac12 \Omega(J_++J_-).
\end{equation}
Moreover, by applying this condition to the $(1,1)$ component we obtain
\begin{equation}\label{eq:form}
 b= - \frac12 \Omega(J_+ - J_-).
\end{equation}
Note that $g  \mp b  =\Omega J_\pm$, so by using that $g$ is symmetric and $b$ skew-symmetric
\begin{equation}\label{eq:J-}
J_-=-\Omega^{-1}J_+^*\Omega.
\end{equation}
Now we can prove that $\Omega$ tames $J_+$, since
\[
 \Omega J_+ = \Omega^{1,1} J_+ = \frac12(\Omega J_+-J_+^*\Omega) = g > 0.
\]
We still have to prove the integrability conditions: an almost generalized K\"ahler structure is integrable if and only if $J_+,J_-$ are integrable and $J_+d\omega_+=-J_-d\omega_-=db$. However, $b= \mp \Omega_\pm^{2,0+0,2}J_\pm$, i.e.
\[
 b_\pm^{2,0}= \mp   i\Omega_\pm^{2,0} \quad \text{and} \quad b_\pm^{0,2}=\pm  i\Omega_\pm^{0,2},
\]
where $b^{2,0}_+$ is the $(2,0)$-part of $b$ with respect to $J_+$. Then if $J_+,J_-$ are integrable the condition $J_+d\omega_+=-J_-d\omega_-=  db$ is necessarily satisfied.
\end{proof}
The bihermitian structure $(g,J_\pm,b)$ associated to $(\J_\Omega,\J_2)$ was also considered in \cite{OP}.

\begin{rem}
By a direct computation,  if ${\mathcal J}_1 = {\mathcal J}_{\Omega}$ then the second generalized complex structure $\mathcal J_2$ can be written as
\[
\mathcal J_2= \left(
\begin{array}{cc}
- 2(J_++J_-)^{-1} & -\Omega^{-1}(J_+^*-J_-^*)(J_+^*+J_-^*)^{-1} \\
-\Omega(J_+-J_-)(J_++J_-)^{-1} & +2(J_+^*+J_-^*)^{-1}
\end{array} \right).
\]
Moreover, by applying Equations (6.4) and (6.5) of \cite{gual-th} we have that
\[
\begin{split}
&L^+ = L \cap L_\Omega = \{ X +i\Omega X\ |\ X\in T_\C^{1,0}\}, \\
&L^- = L \cap \overline{L}_\Omega = \{ X-i\Omega X\ |\ \Omega X\in (T_\C^*)^{1,0}\}.
\end{split}
\]
where $L_\Omega,L$ denote the maximal isotropic subspaces associated to $\J_\Omega,\J_2$.
\end{rem}

 When both $J_+$ and $J_-$ are integrable, the condition $J_\pm d\omega_{\pm}=  \pm  db$ holds. In the following proposition we measure the failure of this condition in the non integrable case.
\begin{prop}\label{relation dc-N}
 If $(J_\pm,g,b)$ is the almost bi-Hermitian structure induced by an almost complex  structure $J$ taming a symplectic form $\Omega$, then
\[
\begin{split}
- J_+d\omega_+(X,Y,Z)  +  db(X,Y,Z) = \frac12 \sigma_{XYZ}[\Omega(J_-X,N_+(Y,Z))], \\
- J_-d\omega_-(X,Y,Z)  -  db(X,Y,Z) = \frac12 \sigma_{XYZ}[\Omega(J_+X,N_-(Y,Z))],
\end{split}
\]
where by $N_{\pm}$ we denote the  Nijenhuis tensor of $J_{\pm}$ given by
$$
N_{\pm} (X, Y) = [X, Y] - [J_{\pm} X, J_{\pm} Y] +  J_{\pm}  [J_{\pm} X, Y] +  J_{\pm}  [ X,  J_{\pm} Y]
$$
\end{prop}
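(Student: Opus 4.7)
The plan is to unpack both sides directly from the explicit formulas $g\mp b=\Omega J_\pm$ and then use only the closedness of $\Omega$ together with the definition of $N_+$. Splitting the bilinear form $\Omega J_+=g-b$ into its symmetric and antisymmetric parts yields
\[
2\omega_+(X,Y)=\Omega(X,Y)+\Omega(J_+X,J_+Y),\qquad 2b(X,Y)=-\Omega(J_+X,Y)-\Omega(X,J_+Y),
\]
while the identity $J_-=-\Omega^{-1}J_+^*\Omega$ from Proposition~\ref{main} gives the translation formula $\Omega(J_-X,V)=\Omega(J_+V,X)$. Consequently the right-hand side of the first claim is equivalent to $\tfrac12\sigma_{XYZ}\Omega(J_+N_+(Y,Z),X)$, which is the form best suited to the calculation.

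Since $d\Omega=0$ one has $2d\omega_+=d[\Omega(J_+\cdot,J_+\cdot)]$. Using the Cartan formula
\[
d\alpha(A,B,C)=\sigma_{ABC}\bigl[A\alpha(B,C)-\alpha([A,B],C)\bigr]
\]
to expand $d\omega_+$ at $(J_+X,J_+Y,J_+Z)$ (simplifying via $J_+^2=-1$) and then to expand $db(X,Y,Z)$ using the two-term formula for $b$, the sum $-J_+d\omega_+(X,Y,Z)+db(X,Y,Z)$ decomposes into two kinds of terms: derivative-type terms of the shape $(J_+^aA)\Omega(J_+^bB,J_+^cC)$, and bracket-type terms of the shape $\Omega(J_+^a[J_+^bB,J_+^cC],J_+^dD)$. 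Writing down the three scalar identities coming from $d\Omega=0$ with twisted arguments $(J_+X,Y,Z)$, $(X,J_+Y,Z)$ and $(X,Y,J_+Z)$, one checks that their sum annihilates exactly the collection of derivative-type terms produced by the expansion. What remains is a cyclic sum which assembles, after a short rearrangement, into
\[
\tfrac12\sigma_{XYZ}\!\left[\Omega(J_+[Y,Z],X)-\Omega(J_+[J_+Y,J_+Z],X)-\Omega([J_+Y,Z],X)-\Omega([Y,J_+Z],X)\right],
\]
and the expression in brackets is precisely $\Omega(J_+N_+(Y,Z),X)$ by the definition of the Nijenhuis tensor. Translating back via $\Omega(J_+V,X)=\Omega(J_-X,V)$ yields the first identity.

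The second identity is then obtained by symmetry: interchanging $J_+$ and $J_-$ replaces $\Omega J_\pm$ by $\Omega J_\mp$, hence $(g,b)\mapsto(g,-b)$, $\omega_+\mapsto\omega_-$ and $N_+\mapsto N_-$; applying the first identity to this swapped data reproduces exactly the second. The main obstacle in this plan is not conceptual but clerical: many terms of the two shapes above are generated by the expansions, and one must track carefully which vanish via closedness of $\Omega$ and which combine into the four summands defining $N_+(Y,Z)$. No additional input beyond $d\Omega=0$, the compatibility $g\mp b=\Omega J_\pm$, and the definition of $N_+$ is needed.
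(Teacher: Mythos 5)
Your proposal is correct and follows essentially the same route as the paper: expand $J_\pm d\omega_\pm$ and $db$ via the Cartan formula, cancel the derivative-type terms using $d\Omega=0$ evaluated at $J$-twisted arguments, and reassemble the surviving bracket terms into the Nijenhuis tensor, using $\Omega J_\pm=-J_\mp^*\Omega$ to convert $\Omega(J_+N_+(Y,Z),X)$ into $\Omega(J_-X,N_+(Y,Z))$. The only differences are organizational (you prove the first identity in terms of $J_+$ alone and deduce the second by the swap $J_+\leftrightarrow J_-$, $b\mapsto-b$, whereas the paper computes the second directly keeping both $J_\pm$ in play), and your setup formulas for $\omega_+$, $b$ and the translation identity all check out under the paper's conventions.
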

\begin{proof}
We reduce to the proof of the second condition (the first is similar). Using the relations  $$g=\frac12\Omega(J_++J_-), \quad d\Omega=0, \quad \Omega J_\pm = - J_\mp^*\Omega,$$ we obtain
\[
\begin{split}
 J_-  d\omega_-&\,(X,Y,Z) = d\omega_-(J_-X,J_-Y,J_-Z) \\
=&\,\sigma_{XYZ}[J_-X\,\omega_-(J_-Y,J_-Z)+\omega_-(J_-X,[J_-Y,J_-Z])] \\
=&\, \frac12 \sigma_{XYZ}[J_-X\, \Omega((J_++J_-)Y,J_-Z)+\Omega((J_++J_-)X,[J_-Y,J_-Z])] \\
=&\, \frac12 \sigma_{XYZ}[J_-X\, \Omega(J_-Y,J_-Z)+\Omega(J_-X,[J_-Y,J_-Z])+J_-X\, \Omega(J_+Y,J_-Z)+\Omega(J_+X,[J_-Y,J_-Z])] \\
=&\, \frac12 \sigma_{XYZ}[J_-X\, \Omega(Y,Z)+\Omega(J_+X,[J_-Y,J_-Z])].
\end{split}
\]
Moreover,
\[
 \begin{split}
  db(X,Y,Z)=&\, \sigma_{XYZ}[X\, b(Y,Z)+b(X,[Y,Z])] \\
= & - \, \frac12 \sigma_{XYZ}[X\, \Omega((J_+-J_-)Y,Z)+\Omega((J_+-J_-)X,[Y,Z])] \\
= &- \, \frac12 \sigma_{XYZ}[X\, \Omega(J_+Y,Z)+\Omega(J_+X,[Y,Z])-X\, \Omega(J_-Y,Z)-\Omega(J_-X,[Y,Z])] \\
= & - \, \frac12 \sigma_{XYZ}[-X\, \Omega(Y,J_-Z)+\Omega(J_+X,[Y,Z])-X\, \Omega(J_-Y,Z)-\Omega(J_-X,[Y,Z])].
 \end{split}
\]
So
\[
\begin{split}
- J_-d\omega_-(X,Y,Z)   -db (X,Y,Z)= \frac12 \sigma_{XYZ}[-J_-X\, \Omega(Y,Z)-\Omega(J_+X,[J_-Y,J_-Z])-X\, \Omega(Y,J_-Z)\\
 +\Omega(J_+X,[Y,Z])-X\, \Omega(J_-Y,Z)-\Omega(J_-X,[Y,Z])]
\end{split}
\]
Note that since $d\Omega=0$
\[
\begin{split}
\sigma_{XYZ}[-J_-X\, \Omega(Y,Z)- &\, X\, \Omega(Y,J_-Z)-X\, \Omega(J_-Y,Z)-\Omega(J_-X,[Y,Z])]= \\
=&\, \sigma_{XYZ}[\Omega(X,[Y,J_-Z])+\Omega(X,[J_-Y,Z])  ] \\
=&\, \sigma_{XYZ}[\Omega(J_+X,J_-[Y,J_-Z]+J_-[J_-Y,Z]) ].
\end{split}
\]
Then
\[
- J_-d\omega_-(X,Y,Z) -db (X,Y,Z)=\frac12\sigma_{XYZ}\,\Omega(J_+X,[Y,Z]-[J_-Y,J_-Z]+J_-[Y,J_-Z]+J_-[J_-Y,Z]),
\]
that concludes the proof.
\end{proof}

In the sequel, we will focus our attention on a particular case.
\begin{definition}
Let $M$ be a $2n$-dimensional manifold. An almost generalized K\"ahler structure on $M$  is called \emph{tamed} if it is induced as in Proposition \ref{main} by an integrable complex structure $J$ and a symplectic form $\Omega$ taming $J$.
\end{definition}
Applying Proposition \ref{relation dc-N} we have immediately that a tamed almost generalized K\"ahler structure satisfies
\begin{equation}\label{eq:J+J-tamed}
J_+ d\omega_+=   db, \quad   (J_+d\omega_+  + J_-d\omega_-) (X,Y,Z) = -\frac12 \sigma_{XYZ}[\Omega(J_+X,N_-(Y,Z))].
\end{equation}

The following example shows that the almost complex structure $J_-=-\Omega^{-1}J^*\Omega$ is not always integrable, even in the compact four-dimensional case.

\begin{ex}\label{ESEMPIOdim4} {\bf (A  compact $4$-dimensional example)} Consider the hyper-elliptic surface that can be viewed as a compact quotient by  a lattice  of the solvable  Lie group with structure equations
\[
\begin{cases}
de^1=e^{24} \\
de^2=-e^{14},\\
de^3=de^4=0 \\
\end{cases}
\]
endowed  with the  invariant complex structure $J_+$ defined by $J_+ e_1=-e_2,\ J_+ e_3=-e_4$ and the invariant symplectic form $\Omega=e^{12}+e^{24}+e^{34}$. Note that $J_+$ is integrable and $\Omega$ tames $J_+$, but the almost complex structure $J_-=-\Omega^{-1} J_+^* \Omega$ defined by
\[
{J_-(e_1)=-e_2 + e_3 \quad J_-(e_2)=-e_4 \quad J_-(e_3)=-e_1 - e_4 \quad J_-(e_4)=e_2}
\]
is not integrable.
\end{ex}

By \cite[Theorem 3.2]{cav-form}, for every generalized K\"ahler structure $(\mathcal J_1,\mathcal J_2)$ the DGAs $(\Omega^*(\overline L_i),d_{L_i})$ are formal for $i=1,2$. Moreover, when $\mathcal J_1$ is induced by a symplectic structure, $(\Omega^*(\overline L_1),d_{L_1})$ is isomorphic to $(\Omega^*(M),d)$.

\begin{cor}
Let $(M,J)$ be a complex manifold. If it admits a symplectic form $\Omega$ taming $J$ and such that the almost complex structure $-\Omega^{-1}J^*\Omega$ is integrable, then $M$ is formal.
\end{cor}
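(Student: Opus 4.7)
The plan is to observe that the hypotheses of the corollary precisely give us an honest (integrable) generalized K\"ahler structure whose first factor is $\mathcal J_\Omega$, and then to invoke the cited formality result of Cavalcanti.

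First, I would unpack the setup. Given the complex manifold $(M,J)$ and a symplectic form $\Omega$ taming $J$, Proposition \ref{main} produces an almost generalized K\"ahler structure $(\mathcal J_1,\mathcal J_2)$ on $M$ with $\mathcal J_1=\mathcal J_\Omega$, where the underlying bihermitian data is $J_+=J$, $J_-=-\Omega^{-1}J^*\Omega$, together with the metric $g=\frac12\Omega(J_++J_-)$ and the $2$-form $b=-\frac12\Omega(J_+-J_-)$. By the integrability criterion stated in the same proposition, $(\mathcal J_1,\mathcal J_2)$ is integrable if and only if both $J_+$ and $J_-$ are integrable. The hypothesis of the corollary supplies exactly these two conditions: $J_+=J$ is integrable by assumption on $(M,J)$, and $J_-=-\Omega^{-1}J^*\Omega$ is integrable by the additional hypothesis. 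Hence $(\mathcal J_\Omega,\mathcal J_2)$ is a genuine generalized K\"ahler structure on $M$.

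Next I would apply the two facts recalled from \cite{cav-form} just before the corollary statement. By Cavalcanti's theorem, the differential graded algebra $(\Omega^*(\overline L_1),d_{L_1})$ associated to a generalized K\"ahler structure is formal. In our situation the first generalized complex structure is $\mathcal J_1=\mathcal J_\Omega$, i.e.\ it is induced by the symplectic form $\Omega$; the second fact cited from \cite{cav-form} then says that $(\Omega^*(\overline L_1),d_{L_1})$ is isomorphic as a DGA to the ordinary de Rham complex $(\Omega^*(M),d)$. Combining the two gives formality of $(\Omega^*(M),d)$, which is the definition of $M$ being formal in the sense of Sullivan.

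Since each step is a direct invocation of a result already stated in the excerpt, there is no genuine obstacle; the only thing to be careful about is matching conventions, namely that the $\mathcal J_1$ of \cite{cav-form} is the one identified with the symplectic factor here, so that its $\overline L_1$-complex is the de Rham complex (and not, say, the Dolbeault complex that would arise from the complex factor $\mathcal J_2$). Once that identification is made explicit, the proof reduces to a one-line chain: hypothesis $\Rightarrow$ integrable generalized K\"ahler structure via Proposition \ref{main} $\Rightarrow$ formality of $(\Omega^*(\overline L_1),d_{L_1})$ via \cite[Theorem 3.2]{cav-form} $\Rightarrow$ formality of $(\Omega^*(M),d)$.
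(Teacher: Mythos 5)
Your proposal is correct and is exactly the argument the paper intends: Proposition \ref{main} upgrades the hypotheses to an integrable generalized K\"ahler structure with $\mathcal J_1=\mathcal J_\Omega$, and then the two facts quoted from \cite{cav-form} (formality of $(\Omega^*(\overline L_1),d_{L_1})$ and its identification with the de Rham complex when $\mathcal J_1$ is symplectic) give formality of $M$. No further comment is needed.
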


\vspace{5mm}

\section{Twisted Poisson structures on Riemannian manifolds}\label{sec:twistedRiemann}

We recall the following
\begin{definition}
A bivector field $\pi$ on a manifold $M$ is called a \emph{twisted Poisson structure} if there exist a closed 3-form $\phi$ such that
$$
[\pi, \pi]_s = \frac {1}{2} \Lambda^3 \pi^{\#} \phi,
$$
where $\pi^{\#}: T^* M \rightarrow TM$  is the vector bundle homomorphism induced by $\pi$ and $[\,,]_s$ is the Schouten bracket on bivector fields.

\end{definition}

Let $(M,g)$ be a Riemannian manifold and $Q:TM\to TM$ a skew-symmetric endomorphism of the tangent bundle of $M$, i.e. such that  $g(QX,Y)=-g(X,QY)$, for every $X$ and $Y$. Denote with $S$ the 2-form corresponding to $Q$ as $S(X,Y)=g(QX,Y)$, and define the bivector field $\tilde Q$ as $\tilde Q=\#_2^{-1}S$, where $\#:TM\to T^*M$ is the isomorphism induced by the metric $g$. Our aim is to understand under which conditions $\tilde Q$ is a twisted Poisson structure.

\begin{lemma}\label{lemma}
Let $(M,g)$ be a Riemannian manifold and $Q:TM\to TM$ a skew-symmetric endomorphism of the tangent bundle of $M$. Then the  bivector $\tilde Q$ satisfies
$$
[\tilde Q,\tilde Q]=-2\,\#^{-1}_3\Big(\sigma_{XYZ}\,g\big((\nabla^{LC}_{QX}Q)Y,Z\big)\Big),
$$
\end{lemma}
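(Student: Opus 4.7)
The plan is to reduce the computation of $[\tilde Q,\tilde Q]$ to the Jacobiator of the skew-symmetric bracket on $C^\infty(M)$ induced by the bivector $\tilde Q$, and then to evaluate that Jacobiator in terms of $\nabla^{LC}$. Define
\[
\{f,h\}_{\tilde Q} := \tilde Q(df,dh) = g(Q\nabla f,\nabla h),
\]
where $\nabla f := \#^{-1}(df)$ denotes the gradient of $f$. The standard identity relating the Schouten--Nijenhuis bracket of a bivector to the Jacobiator of its associated bracket,
\[
\tfrac12[\tilde Q,\tilde Q](df_1,df_2,df_3) = -\sigma_{f_1f_2f_3}\{f_1,\{f_2,f_3\}\}_{\tilde Q},
\]
together with $C^\infty(M)$-linearity in each slot, reduces the problem to computing $\sigma\{f_1,\{f_2,f_3\}\}_{\tilde Q}$ on triples of gradients.

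Expanding $(Q\nabla f_1)\bigl(g(Q\nabla f_2,\nabla f_3)\bigr)$ via the metric compatibility of $\nabla^{LC}$ and the product rule $\nabla^{LC}_X(QY) = (\nabla^{LC}_X Q)Y + Q(\nabla^{LC}_X Y)$, each summand splits as
\[
\{f_1,\{f_2,f_3\}\}_{\tilde Q} = g\bigl((\nabla^{LC}_{Q\nabla f_1}Q)\nabla f_2,\nabla f_3\bigr) + g\bigl(Q\,\nabla^{LC}_{Q\nabla f_1}\nabla f_2,\nabla f_3\bigr) + g\bigl(Q\nabla f_2,\nabla^{LC}_{Q\nabla f_1}\nabla f_3\bigr).
\]
The heart of the argument is the cancellation, after cyclic summation in $(f_1,f_2,f_3)$, of the two Hessian-type terms. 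Using the $g$-skew-symmetry of $Q$, the second summand becomes $-\mathrm{Hess}(f_2)(Q\nabla f_1,Q\nabla f_3)$ and the third becomes $\mathrm{Hess}(f_3)(Q\nabla f_1,Q\nabla f_2)$, where $\mathrm{Hess}(f)(X,Y) := g(\nabla^{LC}_X\nabla f,Y)$ is symmetric in $(X,Y)$ because $\nabla^{LC}$ is torsion-free. Organizing the six cyclic images by which function $f_i$ is Hessian-differentiated and applying that symmetry, each pair cancels exactly.

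What remains after cyclic summation is
\[
\sigma_{f_1f_2f_3}\{f_1,\{f_2,f_3\}\}_{\tilde Q} = \sigma_{f_1f_2f_3}\,g\bigl((\nabla^{LC}_{Q\nabla f_1}Q)\nabla f_2,\nabla f_3\bigr),
\]
which is exactly the pairing of the trivector $\#_3^{-1}\bigl(\sigma_{XYZ}g((\nabla^{LC}_{QX}Q)Y,Z)\bigr)$ with $(df_1,df_2,df_3)$. Combining with the factor $-2$ from the Schouten identity above and extending by $C^\infty(M)$-linearity from exact to arbitrary 1-forms yields the claimed formula. The one genuinely delicate step is the Hessian cancellation; everything else is bookkeeping with the Leibniz rule and the musical isomorphisms.
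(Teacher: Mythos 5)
Your argument is correct, and the final formula agrees with the paper's, sign included; but you reach it by a genuinely different route. The paper invokes Vaisman's expression $[\tilde Q,\tilde Q]=\#^{-1}_3\{2\delta_gS\wedge S-\delta_g(S\wedge S)\}$ and evaluates both codifferential terms in an orthonormal frame, observing that the divergence-type terms $\sum_i g((\nabla^{LC}_{E_i}Q)E_i,\cdot)\,g(Q\cdot,\cdot)$ cancel in the difference and only the contractions $\sum_i g((\nabla^{LC}_{E_i}Q)X,Y)g(QE_i,Z)=-g((\nabla^{LC}_{QZ}Q)X,Y)$ survive. You instead use the tensoriality of the Jacobiator of $\{f,h\}=\tilde Q(df,dh)$ and its identification with $\tfrac12[\tilde Q,\tilde Q]$, and the role played in the paper by the cancellation of the $\delta_gS\wedge S$ terms is played in your computation by the symmetry of the Hessian (i.e.\ torsion-freeness of $\nabla^{LC}$). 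Your version is more elementary in that it needs no codifferential identities and no frame, only metric compatibility and the Leibniz rule; the paper's version keeps the computation entirely at the level of the $2$-form $S$ and fits the codifferential formalism it cites. The one convention-sensitive input in your proof is the sign in $\tfrac12[\tilde Q,\tilde Q](df_1,df_2,df_3)=-\sigma_{f_1f_2f_3}\{f_1,\{f_2,f_3\}\}$, equivalently $\tfrac12[\tilde Q,\tilde Q](df_1,df_2,df_3)=\sigma\{\{f_1,f_2\},f_3\}$; since this is the standard normalization and it reproduces exactly the paper's right-hand side $-2\,\#^{-1}_3(\sigma_{XYZ}\,g((\nabla^{LC}_{QX}Q)Y,Z))$, it is consistent with the Schouten convention used elsewhere in the paper, but it is worth stating explicitly which convention you adopt. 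Your remark on extending from exact to arbitrary $1$-forms is harmless but could be phrased more simply: both sides are tensors, and exact differentials span each cotangent space pointwise.
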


\begin{proof}
First, note that $\nabla^{LC}_XS(Y,Z) = g((\nabla^{LC}_XQ)Y,Z)$. Using \cite{Vaisman}  $[\tilde Q,\tilde Q] = \#^{-1}_3\{2\delta_gS\wedge S-\delta_g(S\wedge S)\}$, where $\delta_g$ is the co-differential $\delta_g=-*d*$ with respect to $g$. For $\delta_g$ we have the following formula:
$\delta_g \alpha(X_1,...X_{n-1})=-\sum_i(\nabla^{LC}_{E_i}\alpha)(E_i, X_1,...,X_{n-1})$ for any $n$-form $\alpha$ and orthonormal frame $(E_1,...,E_{dim(M)})$. Then a direct calculation shows that:
\[
\begin{split}
-2\delta_gS\wedge S(X,Y,Z)=&\, 2\sum_i \Big\{g((\nabla^{LC}_{E_i} Q)E_i, X)g(QY,Z)+g((\nabla^{LC}_{E_i} Q)E_i,Y)g(QZ,X) \\
&\, +g((\nabla^{LC}_{E_i} Q)E_i,Z)g(QX,Y)\Big\}
\end{split}
\]
Also
$$
-\delta_g(S\wedge S)(X,Y,Z)=\sum_i g((\nabla^{LC}_{E_i} (S\wedge S))(E_i,X,Y,Z) = 2\sum_i((\nabla^{LC}_{E_i}) \wedge S)(E_i,X,Y,Z)
$$
When expanded the last term becomes:
\[
\begin{split}
2\Big\{&\,g((\nabla^{LC}_{E_i} Q)E_i,X)g(QY,Z)+g((\nabla^{LC}_{E_i} Q)E_i,Y)g(QZ,X)+g((\nabla^{LC}_{E_i} Q)E_i,Z)g(QX,Y) \\
&\,+g((\nabla^{LC}_{E_i} Q)Y,Z)g(QE_i,X)+g((\nabla^{LC}_{E_i} Q)Z,X)g(QE_i,Y)+g((\nabla^{LC}_{E_i} Q)X,Y)g(QE_i,Z)\Big\}
\end{split}
\]

So after we take the difference we obtain
\[
\begin{split}
(2\delta_gS\wedge S-\delta_g(S\wedge S))(X,Y,Z) =&\, +2\sum_i\Big\{\,g((\nabla^{LC}_{E_i} Q)Y,Z)g(QE_i,X)+g((\nabla^{LC}_{E_i} Q)Z,X)g(QE_i,Y)\\
&\, +g((\nabla^{LC}_{E_i} Q)X,Y)g(QE_i,Z)\Big\}
\end{split}
\]

But
\[
\sum_ig((\nabla^{LC}_{E_i} Q)X,Y)g(QE_i,Z)=\sum_ig((\nabla^{LC}_{g(QE_i,Z)E_i} Q)X,Y) = -g(\nabla^{LC}_{QZ} Q)X,Y).
\]
\end{proof}

\begin{prop}\label{bracket}
Let $(M,g)$ be a Riemannian manifold and $Q:TM\to TM$ a skew-symmetric  endomorphism of the tangent bundle. Then
\begin{equation}\label{q}
\begin{split}
(\#_3[\tilde Q,\tilde Q]_s)(X,Y,Z)=&\, -2\Big\{QX(g(Y,QZ))+QY(g(Z,QX))+QZ(g(X,QY)) \\
& +g(X,[QY,QZ])+g(Y,[QZ,QX])+g(Z,[QX,QY])\Big\}
\end{split}
\end{equation}
\end{prop}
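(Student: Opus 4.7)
The plan is to take Lemma \ref{lemma} as the starting point and do one further expansion of the covariant derivative term, relying on just two facts about the Levi-Civita connection: metric compatibility and the vanishing of torsion.

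First I would write out the covariant derivative of the endomorphism explicitly:
\[
(\nabla^{LC}_{QX}Q)Y \;=\; \nabla^{LC}_{QX}(QY) - Q(\nabla^{LC}_{QX}Y).
\]
Pairing with $Z$ and using the skew-symmetry $g(Q\cdot,\cdot)=-g(\cdot,Q\cdot)$ on the second piece turns it into $g(\nabla^{LC}_{QX}Y,QZ)$. The metric-compatibility identity $QX(g(Y,QZ))=g(\nabla^{LC}_{QX}Y,QZ)+g(Y,\nabla^{LC}_{QX}(QZ))$ then lets me trade $g(\nabla^{LC}_{QX}Y,QZ)$ for the derivative of a scalar minus a ``clean'' derivative of $QZ$, yielding
\[
g\bigl((\nabla^{LC}_{QX}Q)Y,Z\bigr) \;=\; g(\nabla^{LC}_{QX}(QY),Z) \;+\; QX\bigl(g(Y,QZ)\bigr) \;-\; g(Y,\nabla^{LC}_{QX}(QZ)).
\]

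Next I would apply the cyclic sum $\sigma_{XYZ}$. The middle term produces exactly the three scalar-derivative terms $QX(g(Y,QZ))+QY(g(Z,QX))+QZ(g(X,QY))$ that appear on the right-hand side of \eqref{q}. For the remaining two types of terms, after relabeling indices in the cyclic sum, I can group them as
\[
\sigma_{XYZ}\bigl[g(Z,\nabla^{LC}_{QX}(QY)-\nabla^{LC}_{QY}(QX))\bigr],
\]
and the torsion-free condition $\nabla^{LC}_{QX}(QY)-\nabla^{LC}_{QY}(QX)=[QX,QY]$ converts this directly into $\sigma_{XYZ}\,g(Z,[QX,QY])$, which is exactly the second cyclic triple on the right-hand side of \eqref{q}.

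Putting the two pieces together gives
\[
\sigma_{XYZ}\,g\bigl((\nabla^{LC}_{QX}Q)Y,Z\bigr) \;=\; \sigma_{XYZ}\bigl[QX(g(Y,QZ))+g(X,[QY,QZ])\bigr],
\]
and multiplying by the factor $-2$ supplied by Lemma \ref{lemma} yields the claimed identity. The whole argument is essentially bookkeeping — no geometric obstacle appears — so the only thing to be careful about is keeping signs consistent when swapping arguments under $g(Q\cdot,\cdot)=-g(\cdot,Q\cdot)$ and when reindexing cyclic sums; in particular the sign in the torsion step must match the sign picked up from moving $Y$ past $\nabla^{LC}$ via metric compatibility, which is where a miscalculation would most likely hide.
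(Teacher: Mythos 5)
Your proof is correct and follows essentially the same route as the paper's: expand $(\nabla^{LC}_{QX}Q)Y$, move $Q$ across the metric by skew-symmetry, apply metric compatibility to produce the scalar-derivative terms, and pair the leftover covariant derivatives under the cyclic sum via the torsion-free identity to obtain the bracket terms. No differences worth noting.
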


\begin{proof}
By Lemma \ref{lemma}, we need to compute $g((\nabla^{LC}_{QX}Q)Y,Z)$. Since $\nabla^{LC}g=0$ we obtain
\begin{align*}
g((\nabla^{LC}_{QX}Q)Y,Z)=&\, g(\nabla^{LC}_{QX}QY,Z)-g(Q(\nabla^{LC}_{QX}Y),Z) \\
=&\, g(\nabla^{LC}_{QX}Y,QZ)+g(\nabla^{LC}_{QX}QY,Z) \\
=&\, QX(g(Y,QZ))-g(Y,\nabla^{LC}_{QX}QZ)+g(\nabla^{LC}_{QX}QY,Z).
\end{align*}
Then
\begin{align*}
(\#_3[Q,Q]_s)(X,Y,Z) =&\,
  QX(g(Y,QZ))-g(Y,\nabla^{LC}_{QX}QZ)+g(\nabla^{LC}_{QX}QY,Z) \\
& QY(g(Z,QX))-g(Z,\nabla^{LC}_{QY}QX)+g(\nabla^{LC}_{QY}QZ,X) \\
& QZ(g(X,QY))-g(X,\nabla^{LC}_{QZ}QY)+g(\nabla^{LC}_{QZ}QX,Y).
\end{align*}
But $g(\nabla^{LC}_{QY}QZ,X)-g(X,\nabla^{LC}_{QZ}QY)=g(X,\nabla^{LC}_{QY}QZ-\nabla^{LC}_{QZ}QY)=g(X,[QY,QZ])$ since the Levi-Civita connection has no torsion, so we have the thesis.
\end{proof}

$\tilde Q$ is a twisted Poisson structure if there exists a closed 3-form $\phi$ such that
\[
[\tilde Q,\tilde Q]_s = \frac {1}{2} \Lambda^3 Q^{\#} \phi.
\]
Note that for any 1-form $\alpha$ we have $\tilde Q\alpha=Q \#^{-1}\alpha$, so
\[
(\Lambda^3 \tilde Q^{\#} \phi)(\alpha,\beta,\gamma)= \phi(Q\#^{-1}\alpha,Q\#^{-1}\beta,Q\#^{-1}\gamma).
\]
Then $\frac {1}{2} \Lambda^3 Q^{\#} \phi=\frac12\phi(QX,QY,QZ)$, so in view of \eqref{q} $\tilde Q$ is a twisted Poisson structure if and only if there exist a closed 3-form $\phi$ such that
\[
\phi(QX,QY,QZ)= -4 \sigma_{XYZ} \big(QX(g(Y,QZ))+g(X,[QY,QZ])\big).
\]

\begin{rem}\label{remInv}
If we suppose that $Q$ is invertible, i.e. it has maximal rank at any point, then $Q^{-1}$ is well defined, so $\tilde Q$ is non-degenerate and we can define its inverse $q(X,Y)=g(Q^{-1}X,Y)$. Moreover
\[
\begin{split}
\phi(X,Y,Z)=&\, -4\big\{ \sigma_{XYZ}(X(g(Q^{-1}Y,Z))) +\sigma_{XYZ}(g(Q^{-1}X,[Y,Z]))\big\} \\
=&\, -4\,dq(X,Y,Z),
\end{split}
\]
so $Q$ is a twisted Poisson structure. Note that it is Poisson if and only if $dq=0$, i.e. $q$ is a symplectic form. Moreover, if on a complex manifold $(M,J)$ we choose $Q=J$, then $q=\omega$ and we obtain
\begin{equation}\label{zabzine}
[\omega^{-1},\omega^{-1}]_s=\#^{-1}_3 (2Jd\omega).
\end{equation}
\end{rem}

The endomorphism $Q: TM \rightarrow TM$  in general has no constant rank and it defines two distributions (in the sense of \cite{Sussmann}) given by  $Im (Q)$ and $Ker (Q)$, that are orthogonal since $g(QX,Y)=-g(X,QY)$. We recall that   in general for a  distribution  $\mathcal D$ on M, also called a \lq \lq generalized distribution",   we mean a  mapping $\mathcal D$  which assigns to every  point  $p \in M$  a linear subspace $\mathcal D (p)$ of the tangent space $T_p M$  without requiring the dimension of $\mathcal D (p)$ to be constant.

A foliated manifold $M$ is one which can be expressed as a disjoint union of subsets called  leaves. A leaf is a connected submanifold (injective immersion) $L \subset M$  such that any point $p \in L$  has a neighborhood $U \subset M$  where the connected component of $P$  in  $L \cap U$  is an embedded submanifold of  $M$.  In a usual foliation
all the leaves have the same dimension, while a generalized foliation allows the dimension of the leaves to vary. In \cite{Sussmann} Sussmann  described necessary and sufficient conditions for a distribution $\mathcal D$ to be integrable into a generalized foliation (i.e. for every point $p\in M$ the tangent space to the leaf through $p$ coincides  with $\mathcal D (p)$).

A distribution $\mathcal D$ is involutive if for any two vector fields $X, Y \in \mathcal D$  their  commutator $[X, Y] \in {\mathcal D}$. If a distribution $\mathcal D$ is regular, i.e. if $\dim \mathcal D(p)$ is constant, then by the classical Frobenius Theorem the distribution  $\mathcal D$ is  involutive if and only if it is integrable. In general (i.e., when $\mathcal D$  has \lq\lq singularities\rq\rq) the condition that $\mathcal D$ is involutive is necessary but not sufficient for the integrability of $\mathcal D$.
However, as pointed out in \cite{Biraglia}, distributions locally of finite type satisfies a Frobenius-like result even if they are not regular in general. We say that a smooth distribution is \emph{locally of finite type} if for every $p \in M$ there are smooth vector fields $X_1, \ldots, X_n \in \mathcal D$ such that
\[
 {\mathcal D}(p) = span < X_1 (p), \ldots, X_n (p)>.
\]
By \cite[Theorem 8.1]{Sussmann} a smooth distribution ${\mathcal D}$ locally of finite type is integrable to a generalized foliation if and only if it is involutive.

Since $Q:TM \rightarrow TM$ is a bundle map of $TM$ and $Im (Q)$  is the
image of a smooth bundle map, then $Im (Q)$ is spanned by the smooth vector fields  $Y = Q(X)$ and hence it is a smooth distribution. Furthermore, for any point $p \in M$  we may choose a local basis of sections $X_1, \ldots X_n$  of  $TM$  in some neighbourhood $U$ of $p$. Then $Q(X_1),  \ldots, Q(X_n)$ certainly span  $Im (Q)$ on $U$, so $Im(Q)$ is locally of finite type and is integrable if and only if $[Im (Q),Im (Q)] \subset Im (Q)$. Moreover, note that the function $rank (Q)$  is  lower semi-continuous.

\begin{theorem}\label{propInt}
Let $(M,g)$ be a Riemannian manifold and $Q:TM\to TM$ a skew-symmetric  endomorphism of the tangent bundle of $M$.  If $Im (Q)$ is an involutive  distribution, then it is integrable to a generalized foliation, i.e. $M$  can be
expressed locally as a disjoint union of embedded submanifolds (the leaves) such that at any point
$p \in M$, the tangent space to the leaf through $p$ is precisely $Im(Q) (p)$ and the dimension of the leaf is a lower semi-continuous function on the manifold. Moreover, every leaf has  a twisted Poisson structure.
\end{theorem}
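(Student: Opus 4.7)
The proof plan splits naturally into two parts: first, establish that $Im(Q)$ integrates to a generalized foliation; second, exhibit the twisted Poisson structure on each leaf. Both parts exploit machinery that has essentially already been assembled earlier in the section.

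For the integrability, I would simply invoke the criterion recalled immediately before the theorem. The preceding discussion has already observed that $Im(Q)$ is a smooth distribution locally of finite type (the vector fields $Q(X_1),\ldots,Q(X_n)$ obtained from any local frame $X_1,\ldots,X_n$ of $TM$ span $Im(Q)$ near each point) and that $rank(Q)$ is lower semi-continuous. Under the standing assumption that $Im(Q)$ is involutive, \cite[Theorem~8.1]{Sussmann} gives integrability to a generalized foliation whose leaf through $p$ has tangent space $Im(Q)(p)$, and the lower semi-continuity of the leaf dimension is just the lower semi-continuity of $rank(Q)$.

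For the twisted Poisson structure, fix a leaf $L$. The first step is to use skew-symmetry of $Q$ with respect to $g$ to get the orthogonal decomposition $TM|_L = Im(Q) \oplus Ker(Q) = TL \oplus (TL)^{\perp}$. This yields simultaneously that $Q(TL) \subseteq Im(Q) = TL$ and that $Q|_{TL}$ has trivial kernel, so the restriction $Q_L : TL \to TL$ is a skew-symmetric isomorphism on the Riemannian manifold $(L, g|_L)$. Remark~\ref{remInv} then applies verbatim: the inverse $q_L(X,Y) = g(Q_L^{-1}X,Y)$ is a well-defined $2$-form on $L$, and the associated bivector $\tilde Q_L$ is a twisted Poisson structure with closed twisting $3$-form $\phi_L = -4\, dq_L$.

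The only nontrivial point requiring care, and the one I expect to be the main obstacle, is checking that the computation underlying Remark~\ref{remInv} is genuinely intrinsic to $L$. Here the formulation of Proposition~\ref{bracket} is decisive: the right-hand side of \eqref{q} is expressed purely in terms of Lie brackets of $QX, QY, QZ$ and of directional derivatives of $g(\cdot,Q\cdot)$ along vectors in $Im(Q)$. For $X,Y,Z \in TL$ we have $QX,QY,QZ \in TL$ and all brackets occurring remain tangent to $L$, so the identity holds equally for the intrinsic bivector $\tilde Q_L$ computed on $(L,g|_L)$. This confirms that the twisted Poisson equation derived in Remark~\ref{remInv} is valid on the leaf itself, finishing the argument.
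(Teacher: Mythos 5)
Your proposal is correct and follows essentially the same route as the paper: integrability via Sussmann's Theorem 8.1 using that $Im(Q)$ is locally of finite type and involutive, and then the twisted Poisson structure on each leaf from the non-degeneracy of $Q_L$ together with Remark \ref{remInv}, with twisting form an exact (hence closed) multiple of $dq_L$. Your additional check that the bracket formula of Proposition \ref{bracket} is intrinsic to the leaf is a welcome refinement of a step the paper leaves implicit, but it is not a different argument.
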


\begin{proof} The first part of the proposition follows by \cite[Theorem 8.1]{Sussmann} and by using that $Im(Q)$ is involutive, i.e. $[Im (Q),Im (Q)] \subset Im (Q)$. Moreover, since on every leaf $L$ the endomorphism $Q$ is non-degenerate, as noted in Remark \ref{remInv} $\tilde Q_L=\tilde Q\vert_L$ satisfy
\[
[\tilde Q_L,\tilde Q_L]_s = \frac {1}{2} \Lambda^3 Q_L^{\#} dq_L
\]
where $q_L(X,Y)=g(Q_L^{-1}X,Y)$.
\end{proof}

As a consequence we have the following.
\begin{cor}
Let $(M,g)$ be a Riemannian manifold and $Q:TM\to TM$ a skew-symmetric  endomorphism of the tangent bundle of $M$.  If $Im (Q)$ is an involutive  distribution of constant rank, then   $\tilde Q$ is a twisted Poisson structure \end{cor}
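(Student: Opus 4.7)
The constant rank assumption upgrades Theorem \ref{propInt} to a much cleaner setting: $Im(Q)$ is now a genuine smooth subbundle of $TM$, the classical Frobenius theorem gives a regular foliation $\mathcal{F}$ tangent to $Im(Q)$, and the $g$-skewness of $Q$ yields the orthogonal decomposition $TM = Im(Q) \oplus Ker(Q)$ at every point. In particular, the restriction $Q|_{Im(Q)} \colon Im(Q) \to Im(Q)$ is a bundle isomorphism, so $Q^{-1}$ is well-defined as an endomorphism of the subbundle $Im(Q)$. My plan is to globalize the leaf-wise twisted Poisson structures produced by Theorem \ref{propInt} by exhibiting a single closed 3-form on $M$.

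Let $\pi\colon TM \to Im(Q)$ be the orthogonal projection and define a global 2-form on $M$ by
\[
\tilde q(X,Y) := g(Q^{-1}\pi(X), \pi(Y)).
\]
By construction, the pullback of $\tilde q$ to each leaf $L$ is exactly the 2-form $q_L$ of Remark \ref{remInv}. Set $\phi := -4\, d\tilde q$, which is automatically closed (in fact exact). I would then show that $\phi$ witnesses $\tilde Q$ as a twisted Poisson structure, i.e. that $\phi(QX,QY,QZ)$ agrees with the right-hand side of Proposition \ref{bracket}, since the formula quoted in the paragraph preceding Remark \ref{remInv} says $(\Lambda^3 \tilde Q^\# \phi)(\#X,\#Y,\#Z) = \phi(QX,QY,QZ)$.

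The computation uses the standard formula for the exterior derivative of a 2-form applied to $(QX,QY,QZ)$, together with two elementary observations. First, because $Ker(Q) \perp Im(Q)$, the $Ker(Q)$-component of $Y$ is annihilated by $g(\,\cdot\,, QZ)$, so $\tilde q(QY,QZ) = g(Q^{-1}QY, QZ) = g(Y,QZ)$. Second, involutivity of $Im(Q)$ places $[QX,QY]$ inside $Im(Q)$, and then skew-symmetry of $Q$ gives $\tilde q([QX,QY], QZ) = g(Q^{-1}[QX,QY], QZ) = -g([QX,QY], Z)$. Summing cyclically and comparing with Proposition \ref{bracket} yields $(\#_3[\tilde Q,\tilde Q]_s)(X,Y,Z) = \tfrac12 \phi(QX,QY,QZ)$, which is the twisted Poisson equation.

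The only substantive issue is the globalization step, and the constant rank hypothesis is exactly what makes it work: without it, $Im(Q)$ is only a singular distribution, $Q^{-1}$ cannot be packaged as a section of a bundle over $M$, and one cannot form a global 2-form $\tilde q$ to differentiate. Once the constant-rank regularity is in hand, producing $\tilde q$ is immediate and the verification reduces to the short algebraic computation described above.
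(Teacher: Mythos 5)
Your proof is correct and takes essentially the same route as the paper: the paper also builds a global $2$-form $q$ by extending $g(Q^{-1}X,Y)$ by zero off $Im(Q)$ (identical to your $\tilde q$ defined via the orthogonal projection, using $TM=Im(Q)\oplus Ker(Q)$) and sets $\phi=-4\,dq$. The only difference is that you spell out the verification of $\phi(QX,QY,QZ)$ against Proposition \ref{bracket}, including the role of involutivity in handling the bracket terms, which the paper leaves implicit.
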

\begin{proof}  For every point $p\in M$ we have the orthogonal decomposition $T_pM=Im (Q) (p) \oplus Ker (Q) (p)$.  As a consequence of Theorem   \ref{propInt} the $3$-form $\phi$ $$
\phi(X,Y,Z)= \left\{
\begin{array}{ll}
(\#_3[\tilde Q,\tilde Q]_s)(Q^{-1}X,Q^{-1}Y,Q^{-1}Z) & \text{if}\ X,Y,Z\in Im(Q) \\
0 & \text{otherwise}
\end{array} \right.
$$
 is well defined. If one considers the 2-form
$$
q (X, Y) = \left \{ \begin{array}{ll}
g (Q^{-1} X, Y) & X, Y \in Im (Q),\\
0 & {\mbox{otherwise}},
\end{array} \right.
$$
then $\phi = - 4 d q$ and so $\phi$ is closed and
\[
\phi(QX,QY,QZ)= -4 \sigma_{XYZ} \big(QX(g(Y,QZ))+g(X,[QY,QZ])\big).
\]
 \end{proof}

\vspace{5mm}

\section{Twisted Poisson structures on Tamed almost Generalized K\"ahler structures}\label{sec:twistedGK}

Generalized complex and K\"ahler structures are deeply related with Poisson structures. If $\J$ is a generalized complex manifold on $M$, Gualtieri \cite{gual-ann} proved that $\beta=\pi_{TM}(\J|_{T^*M})$ is a Poisson structure. Moreover, let $(M, J_{\pm}, g, b)$ be a generalized K\"ahler manifold. Then the endomorphism of the tangent bundle $Q=[J_+,J_-]$ is skew-symmetric and satisfy $QJ_\pm=-J_\pm Q$, so the induced 2-form $S$ has type $(2, 0)+(0, 2)$ with respect to both $J_{\pm}$. In \cite{Hitchin} Hitchin proved that the $(2,0)$-part $\sigma_+$ (with respect to $J_+$) of the bivector $\tilde Q=\#^{-1}_2S$ is a holomorphic Poisson structure, i.e. $\debar\sigma_+=0$ and $[\sigma_+,\sigma_+]_s=0$. In particular, this implies that the bivector $\tilde Q=\#^{-1}_2S$ is a Poisson structure.

Now, let $(\J_\Omega,\J_2)$ be a tamed almost generalized K\"ahler structure on $M$ induced by $(\Omega,J)$. If $J_-=-\Omega^{-1}J^*\Omega$ is integrable, both $\J_\Omega$ and $\J_2$ induce a Poisson structure, which are given by
\[
 \beta^1=\omega^{-1}_+ + \omega^{-1}_-=2\Omega^{-1} \quad \text{and} \quad \beta^2=\omega^{-1}_+ - \omega^{-1}_-
\]
(note that the first one is simply the Poisson structure induced by the symplectic form $\Omega$). Moreover, the $(2,0)$-part of the bivector $\tilde Q$ induced by $Q=[J_+,J_-]$ is a holomorphic Poisson structure.

The goal of this section is to use the results obtained in section \ref{sec:twistedRiemann} to investigate the existence of twisted Poisson structures on a tamed almost generalized K\"ahler structure when $J_-$ is not integrable. First we consider the structures defined by $\pi_{TM}(\J|_{T^*M})$ on the generalized complex structures  $(\J_\Omega,\J_2)$.

\begin{lemma}\label{lemma:twistedGenCx}
Given a tamed almost generalized K\"ahler manifold $(M, J_\pm,g,b)$, then $\beta^1$ is a Poisson structure and
\begin{equation}\label{differenceomega}
[\beta^2,\beta^2]_s = 4\, \#_3^{-1}  ( J_+ d \omega_+ + J_- d \omega_-).
\end{equation}
In particular, $\beta^2$ is a Poisson structure if and only if $ J_- d\omega_- = - J_+d\omega_+$, i.e. if \eqref{eq:J+J-tamed} vanishes.
\end{lemma}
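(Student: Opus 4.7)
The plan is to reduce everything to the single identity \eqref{zabzine} of Remark \ref{remInv} together with the bilinearity and graded symmetry of the Schouten bracket on bivector fields. On bivectors the Schouten bracket is symmetric, $[\pi_1,\pi_2]_s=[\pi_2,\pi_1]_s$, so the expansion
\[
[\pi_1\pm\pi_2,\pi_1\pm\pi_2]_s=[\pi_1,\pi_1]_s\pm 2[\pi_1,\pi_2]_s+[\pi_2,\pi_2]_s
\]
will be the main algebraic tool.

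First I would dispose of $\beta^1$: since $\beta^1=2\Omega^{-1}$ and $\Omega$ is a symplectic form (closed, non-degenerate), the dual bivector $\Omega^{-1}$ is automatically Poisson by the classical correspondence between symplectic forms and non-degenerate Poisson structures, so $[\beta^1,\beta^1]_s=0$. Next I would apply \eqref{zabzine} to each of the fundamental forms $\omega_\pm$, which are non-degenerate because $g$ is a metric and $J_\pm$ are almost complex structures; this yields
\[
[\omega_\pm^{-1},\omega_\pm^{-1}]_s=2\,\#_3^{-1}(J_\pm d\omega_\pm).
\]

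Combining the vanishing of $[\beta^1,\beta^1]_s$ with the expansion above lets me solve for the mixed term,
\[
[\omega_+^{-1},\omega_-^{-1}]_s=-\#_3^{-1}(J_+ d\omega_++J_- d\omega_-),
\]
and substituting this together with the two self-brackets into the expansion of $[\beta^2,\beta^2]_s=[\omega_+^{-1}-\omega_-^{-1},\omega_+^{-1}-\omega_-^{-1}]_s$ should produce exactly the formula \eqref{differenceomega}. The final equivalence is then immediate: $\beta^2$ is a Poisson structure iff $[\beta^2,\beta^2]_s=0$, which by \eqref{differenceomega} is equivalent to $J_+d\omega_++J_-d\omega_-=0$, and by \eqref{eq:J+J-tamed} this amounts to the vanishing of $\sigma_{XYZ}[\Omega(J_+X,N_-(Y,Z))]$.

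The only real obstacle is the bookkeeping of signs and numerical factors. One must confirm that the Schouten bracket on bivectors is symmetric rather than antisymmetric (so that the cross terms add rather than cancel in the expansion for $\beta^1$), and that the factor $2$ appearing in \eqref{zabzine} combines correctly with the doubling of the cross term to yield the precise factor $4$ in \eqref{differenceomega}.
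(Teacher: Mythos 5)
Your proposal is correct and follows essentially the same route as the paper: both invoke the identity \eqref{zabzine} for $\omega_\pm^{-1}$, use the vanishing of $[\beta^1,\beta^1]_s=[2\Omega^{-1},2\Omega^{-1}]_s$ to extract the mixed bracket $[\omega_+^{-1},\omega_-^{-1}]_s=-\#_3^{-1}(J_+d\omega_++J_-d\omega_-)$, and then assemble $[\beta^2,\beta^2]_s$ by bilinearity and the symmetry of the Schouten bracket on bivectors. The sign and factor bookkeeping you flag works out exactly as you describe, yielding the factor $4$ in \eqref{differenceomega}.
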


\begin{proof}  For any almost Hermitian manifold $(M, J, g)$ the associated fundamental $2$-form $\omega$ satisfy the relation \eqref{zabzine}, as also proved in \cite{LZ}. Applying this result, we obtain
\[
[\omega_+^{-1},\omega_+^{-1}]_s=\#^{-1}_3 (2J_+d\omega_+), \qquad [\omega_-^{-1},\omega_-^{-1}]_s=\#^{-1}_3 (2J_-d\omega_-).
\]
Moreover, by using that $\beta^1 = 2\Omega^{-1}$ is a Poisson structure, we find that
\[
[\omega^{-1}_+ , \omega^{-1}_-]_s = - \#_3^{-1}  ( J_+ d \omega_+ +  J_- d \omega_-),
\]
and then \eqref{differenceomega}.
\end{proof}

\begin{prop}
Let $(\J_\Omega,\J_2)$ be a tamed almost generalized K\"ahler structure on $M$ induced by  a Hermitian symplectic structure $(\Omega,J)$. Then the bivector $\beta^2$ is a twisted Poisson structure if and only if there is a closed 3-form $\phi$ such that $J_- d\omega_-+ J_+d\omega_+ = \frac18(J_+-J_-)\phi$.
\end{prop}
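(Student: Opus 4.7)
The plan is to combine Lemma \ref{lemma:twistedGenCx} with a direct computation of the sharp map $(\beta^2)^\#\colon T^*M\to TM$. By the definition of a twisted Poisson structure, $\beta^2$ is twisted Poisson if and only if there exists a closed $3$-form $\phi$ such that
\[
[\beta^2,\beta^2]_s \;=\; \tfrac{1}{2}\,\Lambda^3(\beta^2)^\#\phi,
\]
and Lemma \ref{lemma:twistedGenCx} has already identified the left-hand side with $4\,\#_3^{-1}(J_+d\omega_+ + J_-d\omega_-)$. Thus the whole task is to rewrite the right-hand side of the defining equation explicitly in terms of $J_\pm$, $g$ and $\phi$, and then compare $3$-forms.

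The intermediate step is the sharp map of $\omega^{-1}$. For any almost Hermitian structure $(J,g)$ with fundamental 2-form $\omega(\,\cdot,\,\cdot)=g(\,\cdot,J\,\cdot)$, a short verification (using that $J$ is $g$-orthogonal, so $J^{-1}=-J$) shows that the bivector $\omega^{-1}$ satisfies $(\omega^{-1})^\#(\alpha)=J\,\alpha^\sharp$ with $\alpha^\sharp=g^{-1}\alpha$. Hence from $\beta^2=\omega_+^{-1}-\omega_-^{-1}$ one reads
\[
(\beta^2)^\#(\alpha) \;=\; (J_+-J_-)\,\alpha^\sharp,
\]
and, adopting the paper's convention that $(P\phi)(X,Y,Z)=\phi(PX,PY,PZ)$ for an endomorphism $P$ of $TM$ and a $3$-form $\phi$, the trivector $\Lambda^3(\beta^2)^\#\phi$ evaluated at $(\alpha,\beta,\gamma)$, with $X=\alpha^\sharp$, $Y=\beta^\sharp$, $Z=\gamma^\sharp$, equals $((J_+-J_-)\phi)(X,Y,Z)$.

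The final step is to evaluate $\#_3^{-1}(J_+d\omega_++J_-d\omega_-)$ at the same triple $(\alpha,\beta,\gamma)$, which just raises indices and returns $(J_+d\omega_++J_-d\omega_-)(X,Y,Z)$. Substituting into the twisted-Poisson equation and equating pointwise therefore gives
\[
4(J_+d\omega_++J_-d\omega_-)(X,Y,Z) \;=\; \tfrac{1}{2}\,((J_+-J_-)\phi)(X,Y,Z),
\]
i.e.\ $J_+d\omega_++J_-d\omega_- = \tfrac{1}{8}(J_+-J_-)\phi$, which is precisely the stated equivalence (with $\phi$, respectively $-\phi$, playing the role of the closed $3$-form in the two implications). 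The only delicate point I expect is the sign bookkeeping in identifying $(\omega^{-1})^\#$ and the action of $P$ on a $3$-form; once the conventions are fixed, the proof is a direct substitution into Lemma \ref{lemma:twistedGenCx}, with no further geometric content.
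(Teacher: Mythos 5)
Your proof is correct and follows essentially the same route as the paper: identify $(\beta^2)^\#\alpha=(J_+-J_-)\#^{-1}\alpha$, plug into the defining equation of a twisted Poisson structure, and compare with Lemma \ref{lemma:twistedGenCx}. Your explicit verification that $(\omega^{-1})^\#\alpha=J\alpha^\sharp$ just spells out a step the paper states without comment.
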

\begin{proof}
Note that $\beta^2\alpha=(J_+-J_-)\#^{-1}\alpha$ for any $\alpha\in T^*M$, so the twisted Poisson condition becomes
\[
 [\beta^2,\beta^2]_s = \frac12\#_3^{-1} (J_+-J_-)\phi
\]
for some closed 3-form $\phi$. Then the proof follows by Lemma \ref{lemma:twistedGenCx}.
\end{proof}

Now, let us consider the endomorphism $Q=[J_+,J_-]:TM\to TM$. In the integrable case, the $(2,0)$-part of the bivector $\tilde Q$ induced by $Q$ as in Section \ref{sec:twistedRiemann} is a holomorphic Poisson structure. However, when $J_-$ is not integrable, we can apply Theorem \ref{propInt} to obtain conditions for the existence of a \lq\lq local \rq\rq twisted Poisson structure.
\begin{prop}\label{prop:twistFoliation}
Let $(\J_\Omega,\J_2)$ be a tamed almost generalized K\"ahler structure on $M$ induced by  a Hermitian symplectic structure $(\Omega,J)$. If $Im(Q)$ is an involutive  distribution, then it is integrable to a generalized foliation and on every leaf $L$ the bivectors $\tilde Q_L=\tilde Q\vert_L$ and $\beta^2_L=\beta^2\vert_L,$ are twisted Poisson structures.
\end{prop}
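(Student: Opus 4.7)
The plan is to reduce both claims to applications of Theorem~\ref{propInt}, one for each of the two bivectors.

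For $\tilde Q_L$ I would invoke Theorem~\ref{propInt} directly on $Q = [J_+,J_-]$. The only hypothesis to check is that $Q$ is $g$-skew-symmetric: each $J_\pm$ is $g$-skew because $g$ is Hermitian for it, and a one-line computation then gives $[J_+,J_-]^{*} = -[J_+,J_-]$. Combined with the involutivity of $\mathrm{Im}(Q)$ assumed in the statement, Theorem~\ref{propInt} simultaneously produces the generalized foliation and the twisted Poisson structure $\tilde Q_L$ on each leaf $L$.

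For $\beta^2_L$ the key observation is that $\beta^2$ is itself the bivector associated (via $g$) to a skew-symmetric endomorphism, namely $P := J_+ - J_-$. Indeed, the identity $\beta^2(\alpha) = (J_+-J_-)\#^{-1}\alpha$ used in the preceding proposition is exactly the relation $\tilde P^{\#} = P\circ \#^{-1}$ characterizing the bivector that Theorem~\ref{propInt} assigns to $P$. A second application of Theorem~\ref{propInt}, now to $P$, will therefore produce a twisted Poisson structure on the leaves of $\mathrm{Im}(P)$, and that bivector will agree with $\beta^2$ restricted to those leaves.

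The substantive step is verifying that $\mathrm{Im}(P)$ is involutive. I would do this by establishing the stronger identity $\mathrm{Im}(P) = \mathrm{Im}(Q)$, which also guarantees that the two foliations coincide, so that $\beta^2_L$ genuinely lives on the same leaves $L$ produced in the first step. The algebraic identity $(J_+-J_-)(J_++J_-) = [J_+,J_-] = Q$, immediate from $J_\pm^{\,2} = -\mathrm{Id}$, reduces $\mathrm{Im}(P) = \mathrm{Im}(Q)$ to the invertibility of $J_++J_-$. This is exactly where the tamed condition enters: from Proposition~\ref{main} one has $g = \tfrac12\Omega(J_++J_-)$, and the non-degeneracy of the Riemannian metric $g$ together with that of $\Omega$ forces $J_++J_-$ to be a bundle automorphism. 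This invertibility is the only real obstacle in the argument; everything else is formal bookkeeping around the two applications of Theorem~\ref{propInt}.
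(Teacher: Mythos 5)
Your proposal is correct and follows essentially the same route as the paper: both parts reduce to Theorem~\ref{propInt}, and the treatment of $\beta^2$ hinges in both cases on the factorization $Q=(J_+-J_-)(J_++J_-)$ together with the invertibility of $J_++J_-$ (from $g=\tfrac12\Omega(J_++J_-)$). The only cosmetic difference is that you phrase this as the global identity $\mathrm{Im}(J_+-J_-)=\mathrm{Im}(Q)$ and apply Theorem~\ref{propInt} a second time, whereas the paper argues leafwise via the nondegeneracy of $(J_+-J_-)\vert_L$ as in Remark~\ref{remInv}.
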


\begin{proof}
 The only non-trivial part is to prove that $\beta^2_L$ is a twisted Poisson structure on every leaf of the foliation induced by $Im(Q)$. Since $\beta^2\alpha=(J_+-J_-)\#^{-1}\alpha$ for any $\alpha\in T^*M$, $\beta^2$ is the bivector induced as in Section \ref{sec:twistedRiemann} by the endomorphism $J_+-J_-$. Moreover, since $Q=(J_+-J_-)(J_++J_-)$ and $J_++J_-$ is invertible, $J_+-J_-$ is non-degenerate if and only if $Q$ is non-degenerate. Then on every leaf $L$ the restriction $(J_+-J_-)\vert_L$ is non-degenerate, so as in the proof of Theorem \ref{propInt}
\[
 [\beta^2_L,\beta^2_L]_s= \frac12 \Lambda^3 (J_*+J_-)\vert_L^\# dp_L,
\]
where $p_L(X,Y)=g\vert_L\big((J_+-J_-)\vert_L^{-1}X,Y\big)$ for any vector fields $X,Y$ on $L$.
\end{proof}

If $J_-$ is integrable, in \cite{IKR} it has been shown that the spaces orthogonal to the kernels of $(J_+ \pm J_-)$ and of the commutator $[J_+, J_-]$ (that coincide with the images of these operators since they are skew-symmetric) are always integrable as generalized distributions and thus they are in particular involutive. As remarked in \cite{LZ} this property follows also from the fact that the spaces orthogonal to the kernels of $(J_+ \pm J_-)$ correspond to the symplectic leaves of  $\omega^{-1}_+ + \omega^{-1}_-$  and $\omega^{-1}_+ - \omega^{-1}_-$ respectively.

In the following Proposition we compute $[\tilde Q,\tilde Q]_s$ for a tamed almost generalized K\"ahler structure.
\begin{prop}\label{prop:BracExplicit}
Let $(\J_\Omega,\J_2)$ be a tamed almost generalized K\"ahler structure on $M$ induced by  a Hermitian symplectic structure $(\Omega,J)$. Then
\begin{align}\label{eq:BracExplicit}
\#_3[\tilde Q,\tilde Q]_s (X,Y,Z) =&\, -\sigma_{X,Y,Z} \Big[ (N_-+3\b N_-)(QX,J_-Y,J_+Z)+(N_-+3\b N_-)(QX,J_+Y,J_-Z) \\
&  -J_+d\omega_+(QX,J_-Y,J_+Z)-J_+d\omega_+(QX,J_+Y,J_-Z) -J_+d\omega_+(QX,Y,J_+J_-Z) \\
&  -J_+d\omega_+(QX,J_+J_-Y,Z) + J_-d\omega_-(QX,J_-Y,J_+Z) + J_-d\omega_-(QX,Y,J_-J_+Z)\\
&  + J_-d\omega_-(QX,J_-J_+Y,Z) + J_-d\omega_-(QX,J_+Y,J_-Z) \Big],
\end{align}
where $N_-(X,Y,Z)=g(X,N_-(Y,Z))$ is the 3-tensor associated to the Nijenhuis tensor of $J_-$ and $\b N_-$ is the skew-symmetric part of $N_-$ defined by
\[
 (\b N_-) (X, Y, Z) = \frac13 [ N_-(X, Y, Z)  + N_-(Y, Z, X) + N_- (Z, X, Y)].
\]
\end{prop}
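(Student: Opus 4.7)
I would begin from Lemma \ref{lemma}, which gives
\[
\#_3[\tilde Q,\tilde Q]_s(X,Y,Z) = -2\,\sigma_{XYZ}\,g\bigl((\nabla^{LC}_{QX}Q)Y,Z\bigr),
\]
reducing the problem to computing $\nabla^{LC}Q$. Writing $Q=J_+J_--J_-J_+$ and applying the Leibniz rule yields
\[
\nabla^{LC}_V Q = (\nabla^{LC}_V J_+)J_- - J_-(\nabla^{LC}_V J_+) + J_+(\nabla^{LC}_V J_-) - (\nabla^{LC}_V J_-)J_+,
\]
which displays $\nabla^{LC}Q$ as a linear combination of $\nabla^{LC}J_\pm$. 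For each factor I would invoke the classical almost Hermitian identity expressing $\nabla^{LC}J_\pm$ in terms of $d\omega_\pm$ and the Nijenhuis tensor $N_\pm$,
\[
2g\bigl((\nabla^{LC}_V J_\pm)X,Y\bigr) = d\omega_\pm(V,J_\pm X,J_\pm Y) - d\omega_\pm(V,X,Y) + g\bigl(N_\pm(X,Y),J_\pm V\bigr).
\]

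Because we are in the tamed case, $J_+=J$ is integrable and hence $N_+\equiv 0$, so only the $J_-$-version contributes a Nijenhuis piece. Substituting both identities into the Leibniz expansion and using the anti-commutation $QJ_\pm=-J_\pm Q$, which follows from $S(X,Y)=g(QX,Y)$ being of type $(2,0)+(0,2)$ with respect to both $J_\pm$, allows a $J_\pm$ sitting on the first slot of any $d\omega_\pm$-term to be transferred across $Q$ to the remaining slots. Combined with the convention $J_\pm d\omega_\pm(A,B,C)=d\omega_\pm(J_\pm A,J_\pm B,J_\pm C)$ and $J_\pm^2=-\mathrm{id}$, the various $d\omega_\pm(QX,\bullet,\bullet)$ monomials produced then rearrange into the $J_+d\omega_+(QX,\bullet,\bullet)$ and $J_-d\omega_-(QX,\bullet,\bullet)$ summands appearing in \eqref{eq:BracExplicit}.

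The remaining Nijenhuis term $g(N_-(X',Y'),J_-QX)$ coming from the $\nabla^{LC}J_-$-identity, after using $J_-Q=-QJ_-$ and the $g$-skew-symmetry of $Q$, becomes the three-tensor $N_-(QX,\cdot,\cdot)$ evaluated on pairs built from $J_\pm Y,J_\pm Z$. Applying the cyclic sum $\sigma_{XYZ}$ and the elementary identity
\[
N_-(A,B,C)+N_-(B,C,A)+N_-(C,A,B) = 3\,\b N_-(A,B,C)
\]
collects the twelve raw Nijenhuis monomials into the combination $N_-+3\,\b N_-$ as it appears in the statement.

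The principal obstacle is bookkeeping: the Leibniz expansion has four summands, each producing two or three terms via the almost Hermitian identity, and the outer cyclic sum triples everything, so several dozen signed monomials must be matched against the ten terms in \eqref{eq:BracExplicit}. Particular care is required in tracking the signs that arise when using $QJ_\pm=-J_\pm Q$ to push a $J_\pm$ across $Q$ in the first slot of a $d\omega_\pm$-term, in separating the Nijenhuis residues from the $J_\pm d\omega_\pm$-pieces, and in verifying that the would-be $N_+$-contribution cancels completely thanks to the integrability of $J_+$ guaranteed by the tamed hypothesis.
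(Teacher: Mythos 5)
Your overall strategy -- reduce to $\sigma_{XYZ}\,g((\nabla^{LC}_{QX}Q)Y,Z)$ via Lemma \ref{lemma}, then expand $\nabla^{LC}Q$ by Leibniz and substitute a formula for $\nabla^{LC}J_\pm$ -- is the right skeleton, but it is not the paper's route and, as written, it has a real gap in the step where you claim the monomials "rearrange" into the stated formula. The paper does not use the classical Levi--Civita identity directly: it passes to the Bismut connections $\nabla^\pm$, for which $\nabla^\pm J_\pm=0$, so that only $\nabla^+J_-$ needs to be computed, and it obtains $g((\nabla^+_XJ_-)Y,Z)$ by comparing $\nabla^+$ with $\nabla^-$. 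The point of that detour is that the difference tensor of $\nabla^-$ and $\nabla^{LC}$ is $-\frac14(N_-+3\b N_-)-\frac12 J_-d\omega_-$, which is precisely where the combination $N_-+3\b N_-$ and the fully $J_\pm$-rotated forms $J_\pm d\omega_\pm$ in \eqref{eq:BracExplicit} come from.

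In your version these two features have to be manufactured by hand, and your sketch does not do it. First, the classical identity produces terms $d\omega_\pm(QX,Y,Z)$ and $d\omega_\pm(QX,J_\pm Y,J_\pm Z)$, whereas the target involves $J_\pm d\omega_\pm(QX,\cdot,\cdot)=d\omega_\pm(J_\pm QX,J_\pm\cdot,J_\pm\cdot)$. For the integrable $J_+$ this conversion is the standard identity for forms of type $(2,1)+(1,2)$, but $J_-$ is \emph{not} integrable here, so $d\omega_-$ has a nonzero $(3,0)+(0,3)$ component; converting $d\omega_-(QX,J_-Y,J_-Z)-d\omega_-(QX,Y,Z)$ into $J_-d\omega_-$-terms leaves a residue proportional to $(d\omega_-)^{(3,0)+(0,3)}$, and it is exactly this residue, recombined with the raw monomial $g(J_-QX,N_-(Y,Z))$, that produces $N_-+3\b N_-$. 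Second, the mechanism you propose for assembling $N_-+3\b N_-$ cannot work: the identity $N_-(A,B,C)+N_-(B,C,A)+N_-(C,A,B)=3\b N_-(A,B,C)$ is just the definition of $\b N_-$, and the outer sum $\sigma_{XYZ}$ does not cyclically permute the arguments of $N_-(QX,J_-Y,J_+Z)$, since the three slots carry the different operators $Q$, $J_-$, $J_+$. So the "bookkeeping" you defer is not mere bookkeeping; it requires the additional identity relating $(d\omega_-)^{(3,0)+(0,3)}$ to the skew-symmetrization of $N_-$ (equivalently, the Bismut-connection formula for a non-integrable almost Hermitian structure), which is the missing idea. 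A minor further point: your sign for the Nijenhuis term in the $\nabla^{LC}J$ identity should be checked against the paper's convention for $N_\pm$ (Gauduchon's formula as quoted in the proof of Theorem \ref{condhol(2,0)} has $-g(J_-X,N_-(Y,Z))$).
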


\begin{proof}
By Lemma \ref{lemma}, we need to compute $g((\nabla^{LC}_{QX}Q)Y,Z)$. In the proof we use the Bismut connections $\nabla^\pm$ of the (almost) complex structures $J_\pm$, which satisfy $\nabla^\pm J_\pm = 0$ and following \cite{Gauduchon} are defined by
\[
\begin{split}
& g(\nabla^+_XY,Z) = g(\nabla^{LC}_XY,Z)-\frac12 J_+d\omega_+(X,Y,Z) \\
& g(\nabla^-_XY,Z) = g(\nabla^{LC}_XY,Z)-\frac14(N_-+3\b N_-)(X,Y,Z)-\frac12 J_-d\omega_-(X,Y,Z).
\end{split}
\]

Using these definitions,
\[
 \begin{split}
  g((&\nabla^{LC}_{QX}Q)Y,Z) = \\
  = &\, -g(\nabla^{LC}_{QX}Y,QZ)-g(\nabla^{LC}_{QX}QY,Z) \\
  = &\, -g(\nabla^+_{QX}Y,QZ)-g(\nabla^+_{QX}QY,Z)-\frac12 J_+d\omega_+(QX,Y,QZ)-\frac12 J_+d\omega_+(QX,QY,Z) \\
  = &\, -g((\nabla^+_{QX}J_-)Y,J_+Z)-g((\nabla^+_{QX}J_-)J_+Y,Z)-\frac12 J_+d\omega_+(QX,Y,QZ)-\frac12 J_+d\omega_+(QX,QY,Z).
 \end{split}
\]
By definition of $\nabla^+$ and $\nabla^-$ we obtain that
\[
\begin{split}
 g((\nabla^+J_-)Y,Z)= &\, -\frac12 (N_-+3\b N_-)(X,J_-Y,Z)+\frac12(J_+d\omega_+-J_-d\omega_-)(X,Y,J_-Z)\\
  &\, +\frac12(J_+d\omega_+-J_-d\omega_-)(X,J_-Y,Z).
\end{split}
\]
Using this condition in the previous equation, we obtain the thesis.
\end{proof}

\begin{rem}
  We know that when $J_-$ is integrable, $\tilde Q$ is a Poisson structure, i.e. $[\tilde Q,\tilde Q]_s=0$. Indeed, if $J_-$ is integrable $N_-=0$ and $J_-d\omega_-=-J_+d\omega_+$, so $J_+d\omega_+$ has type $(2,1)+(1,2)$ with respect to both complex structures, then \eqref{eq:BracExplicit} becomes
\begin{align*}
\#_3[\tilde Q,\tilde Q]_s (X,Y,Z) =\sigma_{X,Y,Z} \Big[ &\, 2J_+d\omega_+(QX,J_+Y,J_-Z) +2J_+d\omega_+(QX,J_-Y,J_+Z) -  \\
&\,  +J_+d\omega_+(QX,PY,Z) +J_+d\omega_+(QX,Y,PZ) \Big],
\end{align*}
where $P=J_+J_-+J_-J_+$, and arguing as in \cite{DGMY} we obtain $[\tilde Q,\tilde Q]_s=0$.
\end{rem}

In \cite{Hitchin}  it was shown that for a generalized K\"ahler structure $(g,J_+,J_-, b)$ the skew form  $g(Q X, Y )$  for the bihermitian metric is of type $(2, 0)+(0, 2)$ and defines a holomorphic Poisson structure for either complex structure $J_+$ or $J_-$. For a  tamed almost generalized K\"ahler structure we can prove the following theorem.

\begin{theorem}\label{condhol(2,0)} Let $(\J_\Omega,\J_2)$ be a tamed almost generalized K\"ahler structure on $M$ induced by  a Hermitian symplectic structure $(\Omega,J)$. Then the $(2,0)$-part  $\tilde{Q}^{(2,0)}$ of the bivector  $\tilde Q$  is holomorphic if and only if  $\psi^{(3,0)} = 0$, where
$$\psi (X, Y, Z) = \Omega(X,N_-(Y,Z)).$$
\end{theorem}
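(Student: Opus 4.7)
The plan is to compute $\debar_+\tilde Q^{(2,0)}$ directly and to show that its vanishing is equivalent to $\psi^{(3,0)}=0$. The starting point is the algebraic identity $Q=(J_+-J_-)(J_++J_-)$, immediate from $J_\pm^2=-\mathrm{Id}$; combined with the tamed formula $g=\frac{1}{2}\Omega(J_++J_-)$ from \eqref{eq:metric}, it yields the operator relation
\[
\tilde Q^{\#}\circ\Omega^{\flat}=2(J_+-J_-),
\]
where $\tilde Q^{\#}\colon T^*M\to TM$ and $\Omega^{\flat}\colon TM\to T^*M$ are the musical maps.

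Next, use the integrability of $J_+$ to decompose by $J_+$-types. Writing the $\C$-linear extension of $J_-$ in block form with respect to $TM\otimes\C=T^{1,0}_+\oplus T^{0,1}_+$, so that $J_-Z=aZ+bZ$ for $Z\in T^{1,0}_+$ with $aZ\in T^{1,0}_+$ and $bZ\in T^{0,1}_+$, and noting that $Q$ anti-commutes with $J_+$ (hence $\tilde Q$ is of type $(2,0)+(0,2)$), projecting the above operator relation onto the $T^{1,0}_+\to T^{1,0}_+$ and $T^{1,0}_+\to T^{0,1}_+$ blocks yields the two companion identities
\[
\tilde Q^{(2,0)\#}\circ(\Omega^{(2,0)}_+)^{\flat}=2(i\cdot\mathrm{Id}-a),\qquad \tilde Q^{(0,2)\#}\circ\omega_+^{\flat}\big|_{T^{1,0}_+}=-2b.
\]

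Since $J_+$ is integrable, $T^{1,0}_+$ and $T^{*,1,0}_+$ are holomorphic bundles and $\tilde Q^{(2,0)}$ is holomorphic iff $\debar_+\tilde Q^{(2,0)\#}=0$. Applying $\debar_+$ to the first companion identity reduces the problem to verifying that
\[
\tilde Q^{(2,0)\#}\circ\debar_+(\Omega^{(2,0)}_+)^{\flat}+2\debar_+a=0
\]
is equivalent to $\psi^{(3,0)}=0$. To identify the two conditions, use on the one hand $d\Omega=0$, whose $(1,2)$-component gives $\debar_+\Omega^{(2,0)}_+=-\de_+\omega_+$, and on the other hand the expansion of $N_-(Z_2,Z_3)$ for $Z_2,Z_3\in T^{1,0}_+$ via the block decomposition of $J_-$ together with the formula $[X,Y]^{(1,0)}=-(\debar_+X)(Y)$ for $X\in\Gamma(T^{1,0}_+)$, $Y\in\Gamma(T^{0,1}_+)$; this expresses the $(1,0)$- and $(0,1)$-components of $N_-(Z_2,Z_3)$ in terms of $\debar_+a$ and $\debar_+b$ respectively. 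Pairing with $\Omega$ as $\Omega(Z_1,N_-(Z_2,Z_3))=\Omega^{(2,0)}_+(Z_1,N_-(Z_2,Z_3)^{(1,0)})+\omega_+(Z_1,N_-(Z_2,Z_3)^{(0,1)})$ and using the second companion identity to absorb the $b$-dependent contribution yields the desired equivalence.

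The main technical obstacle is the careful bookkeeping in the last step: one must verify that the $b$-dependent terms on both sides cancel (an algebraic consequence of the companion identity $\tilde Q^{(0,2)\#}\circ\omega_+^{\flat}|_{T^{1,0}_+}=-2b$ combined with $d\Omega=0$), while the $a$-dependent terms reassemble into the pairing $\Omega^{(2,0)}_+(Z_1,N_-(Z_2,Z_3)^{(1,0)})$. Both are type-by-type computations that are routine but require attention to signs and conjugations.
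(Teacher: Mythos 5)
Your algebraic setup is correct and genuinely different from the paper's proof (which works with the Chern connection $D^+$, Gauduchon's formula for $\nabla^{LC}J_-$, and the identities of \cite{DGMY} rather than with a $J_+$-type decomposition of $\Omega$): the relation $\tilde Q^{\#}\circ\Omega^{\flat}=2(J_+-J_-)$ and the two companion identities obtained by projecting onto types are right. The gap is in the reduction step. The Leibniz rule gives only
\[
(\debar_+\tilde Q^{(2,0)\#})\circ(\Omega^{(2,0)}_+)^{\flat}\;=\;-2\,\debar_+a-\tilde Q^{(2,0)\#}\circ\debar_+(\Omega^{(2,0)}_+)^{\flat},
\]
which determines $\debar_+\tilde Q^{(2,0)\#}$ only on the image of $(\Omega^{(2,0)}_+)^{\flat}\colon T^{1,0}_+\to\Lambda^{1,0}_+$. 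Since $b=-\Omega^{2,0+0,2}_+J_+$ and $Q=(J_+-J_-)(J_++J_-)$ with $J_++J_-$ invertible, that map is an isomorphism exactly where $Q$ is nondegenerate, and the theorem assumes no such thing: in the paper's own six-dimensional example $Qe_3=Qe_4=0$ and $\Omega^{2,0+0,2}=\frac12(e^{16}+e^{25})$ annihilates $e_3,e_4$. On a complement of the image your identity says nothing about $\debar_+\tilde Q^{(2,0)\#}$, so the implications ``displayed condition $\Rightarrow$ holomorphicity'' and ``displayed condition $\Rightarrow\psi^{(3,0)}=0$'' both fail to follow in general, and each direction of the stated equivalence needs one of them. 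You would have to supply a separate argument on $\ker(\Omega^{(2,0)}_+)^{\flat}$, where nothing forces $N_-$ or its pairing with $\Omega$ to vanish.

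The second problem is that the step you defer as ``routine bookkeeping'' is where the theorem actually lives. The quantity $-2\debar_+a-\tilde Q^{(2,0)\#}\circ\debar_+(\Omega^{(2,0)}_+)^{\flat}$ sits a priori in $\Lambda^{0,1}\otimes\mathrm{End}(T^{1,0}_+)$, with $n^3$ components per point, while $\psi^{(3,0)}$ has only $\binom{n}{3}$; the content of the result is precisely that this quantity is totally skew-symmetric and reassembles into a contraction of $\Omega(\cdot\,,N_-(\cdot\,,\cdot))$. In the paper this is the identity
\[
2g((D^+_{X}Q)Y,Z)-2g((D^+_{J_+X}Q)Y,J_+Z)=\psi(J_+X,J_+Y,J_+Z)-\psi(J_+X,Y,Z)-\psi(X,J_+Y,Z)-\psi(X,Y,J_+Z),
\]
whose derivation occupies most of the argument and makes essential use of the tamed relation $(J_+d\omega_++J_-d\omega_-)(X,Y,Z)=-\frac12\sigma_{XYZ}\,\Omega(J_+X,N_-(Y,Z))$ from \eqref{eq:J+J-tamed}. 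That relation (or an equivalent) has to enter your computation somewhere — $d\Omega=0$ and the block decomposition of $J_-$ alone do not visibly produce it — so until the matching is actually carried out (and the kernel issue above is resolved), the proposal is a plausible framework rather than a proof. (Minor point: the identity $\debar_+\Omega^{(2,0)}_+=-\de_+\omega_+$ is the $(2,1)$-component of $d\Omega=0$, not the $(1,2)$-component.)
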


\begin{proof} Let $D^+$  be the Chern connection of $(g,J_+)$. Then $$g(D^{+}_XY,Z) = g(\nabla^{LC}_XY,Z) + \frac{1}{2}d\omega_{+}(J_{+}X,Y,Z),$$ where $\omega_+ (X, Y) = g (X, J_+ Y)$ is the fundamental 2-form of $(J_+, g)$. From  \eqref{eq:J+J-tamed}  we have:
$$
J_+ d\omega_+= db, \quad   (J_+d\omega_+  + J_-d\omega_-) (X,Y,Z) =  - \frac12 \sigma_{XYZ}[\Omega(J_+X,N_-(Y,Z))].$$
By \cite[Proposition 1]{Gauduchon}   we have  the following formula for the covariant derivative of $J_-$:
 $$
\begin{array}{c}
2g((\nabla^{LC}_XJ_-)(Y),Z)= - d\omega_-(X,Y,Z) + d\omega_-(X,J_-Y,J_-Z)  - g(J_-X,N_-(Y,Z))= \\  - (d\omega_-)^+(X,Y,Z) + (d\omega_-)^+(X,J_-Y,J_-Z)\\
 - \frac 12 (g(J_-X,N_-(Y,Z)) + g(J_-Y,, N_-(X,Z)) -g(J_-Z,N_-(X,Y)))
 \end{array}
 $$
where $(d\omega_-)^+$ denotes the $(2,1)+(1,2)$-component of the 3-form $d\omega_-$ with respect to $J_-$. In particular $$(d\omega_-)^+(X,Y,Z)-(d\omega_-)^+(X,J_-Y,J_-Z) = (d\omega_-)^+(J_-X,Y,J_-Z)+(d\omega_-)^+(J_-X,J_-Y,Z)$$ (see \cite[formula (1.3.6)]{Gauduchon}). Similarly from $(d\omega_-)^{(2,1)}=0$ follows $$(d\omega_-)^-(X-iJ_-X,Y+iJ_-Y,Z+iJ_-Z)=0$$ and $$(d\omega_-)^-(X,Y,Z)-(d\omega_-)^-(X,J_-Y,J_-Z) = -(d\omega_-)^-(J_-X,Y,J_-Z)-(d\omega_-)^-(J_-X,J_-Y,Z),$$
where by $(d\omega_-)^-$  we denote the $(3,0)+(0,3)$-component of the 3-form $d\omega_-$ with respect to $J_-$.
 From above
$$
\begin{array}{c}
2g((\nabla^{LC}_XJ_-)(Y),Z)= - (d\omega_-)^+(X,Y,Z)  + (d\omega_-)^+(X,J_-Y,J_-Z)\\
- (d\omega_-)^-(X,Y,Z)  +  (d\omega_-)^-(X,J_-Y,J_-Z) - g(J_-X,N_-(Y,Z))\\
=- (d\omega_-)^+(J_-X,Y,J_-Z)  - (d\omega_-)^+(J_-X,J_-Y,Z) \\
+  (d\omega_-)^-(J_-X,Y,J_-Z)  + (d\omega_-)^-(J_-X,J_-Y,Z)  -  g(J_-X,N_-(Y,Z))\\
= - d\omega_-(J_-X,Y,J_-Z) - d\omega_-(J_-X,J_-Y,Z) \\
+ 2(d\omega_-)^-(J_-X,Y,J_-Z) + �2(d\omega_-)^-(J_-X,J_-Y,Z)  - g(J_-X,N_-(Y,Z))\\
=- d\omega_-(J_-X,Y,J_-Z) - d\omega_-(J_-X,J_-Y,Z) \\
- 2(d\omega_-)^-(X,Y,Z) + 2(d\omega_-)^-(X,J_-Y,J_-Z)  - g(J_-X,N_-(Y,Z))\\
=  -  d\omega_-(J_-X,Y,J_-Z) -  d\omega_-(J_-X,J_-Y,Z) \\
+2\phi_1  + g(J_-X,N_-(Y,Z))
\end{array}
$$
  where $$ \begin{array}{lcl} \phi_1(X,Y,Z) &=& - (d\omega_-)^-(X,Y,Z) + (d\omega_-)^-(X,J_-Y,J_-Z)  - g(J_-X,  N_- (Y,Z))\\[4 pt]
 & = &  - \frac 12 (g(J_-X,N_-(Y,Z))- \frac 12 g(J_-Y,, N_-(X,Z)) + \frac 12 g(J_-Z,N_-(X,Y))).
 \end{array}$$
  Denote by $$\begin{array}{ll} \psi_1(X,Y,Z)= - \frac12 \sigma_{XYZ}[\Omega(J_+X,N_-(Y,Z))],  & \phi_2 (X, Y, Z) = 2\phi_1  (X,Y, Z) + g(J_-X,N_-(Y,Z)),\\[4 pt]
\psi_2(X,Y,Z) = \psi_1(X,J_-Y,Z)+\psi_1(X, Y J_-Z), & \psi = \phi_2+\psi_2.
\end{array}
$$
Then from \cite{DGMY}:
$$
\begin{array}{lll}
2g((D^+_XJ_-)(Y),Z)=2g(D_X^+J_-Y,Z)+2g(D^+_XY,J_-Z)=\\[6pt]
2g((\nabla^{LC}_XJ_-)(Y),Z) + d\omega_+(J_+X,J_-Y,Z) + d\omega_+(J_+X,Y,J_-Z)=\\[6pt]
- d\omega_-(J_-X,Y,J_-Z) - d\omega_-(J_-X,J_-Y,Z)+ \phi_2(X,Y,Z)\\
\hspace{3.5cm} + d\omega_+(J_+X,J_-Y,Z) + d\omega_+(J_+X,Y,J_-Z)=\\[6pt]
- d^-\omega_-(X,J_-Y,Z) - d^-\omega_-(X,Y,J_-Z)\\[6pt]
\hspace{1.5cm} - d^+\omega_+(X,J_+J_-Y,J_+Z) - d^+\omega_+(X,J_+Y,J_+J_-Z)+ \phi_2(X,Y,Z)
\end{array}
$$
and since $d^{\pm}\omega_{\pm}(X,Y,Z)=-d\omega_{\pm}(J_{\pm}X,J_{\pm}Y,J_{\pm}Z) = -  J_{\pm}d\omega_{\pm} (X, Y, Z)$  the equation (6) from \cite{DGMY} becomes:
\begin{equation}\label{Chern1}
\begin{array}{lll}
2g((D^+_XJ_-)(Y),Z)= + d^+\omega_+(X,J_-Y,Z) + d^+\omega_+(X,Y,J_-Z)\\[6pt]
\hspace{3.5cm} - d^+\omega_+(X,J_+J_-Y,J_+Z) - d^+\omega_+(X,J_+Y,J_+J_-Z)\\[6pt]
\hspace{3.5cm} + \psi(X,Y,Z)
\end{array}
\end{equation}
Because $J_+$ is integrable again we have:
$$
\begin{array}{lll}
d^+\omega_+(A,B,C)=\\[6pt]
\hspace{1.5cm}d^+\omega_+(J_+A,J_+B,C)+d^+\omega_+(J_+A,B,J_+C)+d^+\omega_+(A,J_+B,J_+C)
\end{array}
$$
So (7) from \cite{DGMY} becomes:
\begin{equation}\label{Chern2}
\begin{array}{lll}
2g((D^+_XJ_-)(Y),Z)= +  d^+\omega_+(J_+X,J_-Y,J_+Z) + d^+\omega_+(J_+X,J_+J_-Y,Z)\\[6pt]
\hspace{2.5cm} + d^+\omega_+(J_+X,Y,J_+J_-Z) + d^+\omega_+(J_+X,J_+Y,J_-Z)+\psi(X,Y,Z)
\end{array}
\end{equation}
Thus,  for $Q=[J_+,J_-]$ we get
$$
\begin{array}{c}
2g((D^+_{X}Q)(Y),Z)-2g((D^+_{J_+X}Q)(Y),J_+Z)=\\[6pt]
-2g((D^+_{X}J_-)(Y),J_+Z)-2g((D^+_{X}J_-)(J_+Y),Z)\\[6pt]
-2g((D^+_{J_+X}J_-)(Y),Z)+2g((D^+_{J_+X}J_-)(J_+Y),J_+Z)
\end{array}
$$
 Then, after applying (\ref{Chern1}) to the first and the second term, and
(\ref{Chern2}) to the third and the fourth term, we easily get
 the last and most important identity (8) from  \cite{DGMY} in the form of:
$$
\begin{array}{lll}\label{derP}
2g((D^+_{X}Q)(Y),Z)-2g((D^+_{J_+X}Q)(Y),J_+Z)&=&\psi(J_+X,J_+Y,J_+Z)-\psi(J_+X,Y,Z)-\psi(X,J_+Y,Z)\\
& &-\psi(X,Y,J_+Z)
\end{array}
$$
From here we notice that
$$
\begin{array}{lll}
\psi(X-iJ_+X,Y-iJ_+Y,Z-iJ_+Z)&=& \psi(X,Y,Z)-\psi(J_+X,J_+Y,Z)\\
& &-\psi(J_+X,Y,J_+Z)-\psi(X,J_+Y,J_+Z)\\
 & &+ i(\psi(J_+X,J_+Y,J_+Z)-\psi(J_+X,Y,Z)\\
 & &-\psi(X,J_+Y,Z)-\psi(X,Y,J_+Z))
\end{array}
$$
Then further we proceed as similarly to obtain:
$$
\begin{array}{lrr}
g(D^{+}_{X+iJ_+X}\widetilde{Q}^{(2,0)},Y\wedge Z)=
2(D^{+}_{X+iJ_+X}S^{(0,2)})(Y,Z)=\\[6pt]
[g((D^{+}_{X}Q)(Y),Z)+ig((D^{+}_{X}Q)(Y),J_+Z)]\\[6pt]
\hspace{0.3cm}+i[g((D^{+}_{J_+X}Q)(Y),Z)+ig((D^{+}_{J_+X}Q)(Y),J_+Z)]=\\[6pt]
[g((D^{+}_{X}Q)(Y),Z)- g((D^{+}_{J_+X}Q)(Y),J_+Z)]\\[6pt]
\hspace{0.3cm}+i[g((D^{+}_{X}Q)(Y),J_+Z)+g((D^{+}_{J_+X}Q)(Y),Z)],
\end{array}
$$
which leads to
\begin{equation}
2(D^{+}_{X+iJ_+X}S^{(0,2)})(Y,Z)= \psi(X-iJ_+X,Y-iJ_+Y,Z-iJ_+Z) = \psi^{(0,3)}(X,Y,Z).
\end{equation}
So $\tilde{Q}^{(2,0)}$ is holomorphic iff $\psi^{(3,0)} = 0$.

Now we compute $\psi$. By recalling that $g=\frac12 \Omega(J_++J_-)$, $g(J_- X,N_-(Y,Z))=g(X,N_-(J_-Y,Z))=g(X,N_-(Y,J_- Z))$ and $\Omega(J_+X,Y)=-\Omega(X,J_-Y)$ we obtain that
\[
\begin{split}
  \phi_2 (X,Y,Z) =&\, 2\phi_1(X,Y,Z)+g(J_-X,N_-Y,Z) = g(J_-Y,N_-(Z,X))+g(J_-Z,N_-(X,Y))\\
  &\, \frac12\Big[ \Omega(J_+J_-Y,N_-(Z,X)) + \Omega(J_+J_-Z,N_-(X,Y)) - \Omega(Y,N_-(Z,X)) - \Omega(Z,N_-(X,Y)) \Big].
\end{split}
\]
Moreover,
\[
\begin{split}
  \psi_2(X,Y,Z)=&\, \psi_1(X,J_-Y,Z)+\psi_1(X,Y,J_-Z) \\
  =&\, \frac12\Big[ -\Omega(J_+X,N_-(J_-Y,Z))-\Omega(J_+J_-Y,N_-(Z,X))-\Omega(J_+Z,N_-(X,J_-Y))\\
  &\,  -\Omega(J_+X,N_-(Y,J_-Z)) -\Omega(J_+Y,N_-(J_-Z,X))-\Omega(J_+J_-Z,N_-(X,Y)) \Big]\\
  =&\, \frac12\Big[ 2\Omega(X,N_-(Y,Z))-\Omega(J_+J_-Y,N_-(Z,X))+\Omega(Z,N_-(X,Y))\\
  &\,  +\Omega(Y,N_-(Z,X))-\Omega(J_+J_-Z,N_-(X,Y)) \Big]\\
\end{split}
\]
so
\[
  \psi(X,Y,Z) = \phi_2 (X,Y,Z)+\psi_2 (X,Y,Z) = \Omega(X,N_-(Y,Z)).
\]
 \end{proof}

\begin{rem} The $(3,0)$-part of $\psi$ can be written as
\[
\begin{split}
  \psi^{(3,0)} (X, Y, Z) =&\, \psi(X,Y,Z)-\psi(J_+X,J_+Y,Z) -\psi(J_+X,Y,J_+Z)-\psi(X,J_+Y,J_+Z)\\
 &\, + i(\psi(J_+X,J_+Y,J_+Z)-\psi(J_+X,Y,Z) -\psi(X,J_+Y,Z)-\psi(X,Y,J_+Z)) \\
 =&\, \Omega(X,N_-(Y,Z)+J_-N_-(J_+Y,Z)+J_-N_-(Y,J_+Z)-N_-(J_+Y,J_+Z)) \\
 &\, -i\Omega(J_+X,N_-(Y,Z)+J_-N_-(J_+Y,Z)+J_-N_-(Y,J_+Z)-N_-(J_+Y,J_+Z))
 \end{split}
\]
thus $\psi^{(3,0)}=0$ if and only if the tensor
\[
  \mathfrak N(Y,Z)= N_-(Y,Z)+J_-N_-(J_+Y,Z)+J_-N_-(Y,J_+Z)-N_-(J_+Y,J_+Z)
\]
vanishes. \end{rem}

\begin{ex} {\bf (A compact $6$-dimensional example)}
Now we consider an example in dimension greater than 4. Let $\mathfrak g$ be the unimodular solvable six-dimensional Lie algebra with structure equations
\[
\begin{cases}
de^1=e^{26} \\
de^2=-e^{16} \\
de^3=e^{46} \\
de^4=-e^{36} \\
de^5=de^6=0
\end{cases}
\]
together with the complex structure $J_+$ defined by $J_+ e_1=-e_2,\ J_+ e_3=-e_4,\ J_+ e_5=-e_6$ and the symplectic form $\Omega=e^{12}+e^{34}+e^{56}+e^{16}$. Then ($J_+,\Omega$) is a Hermitian symplectic structure, but the almost complex structure $J_-=-\Omega^{-1} J_+^* \Omega$ defined by
\[
{J_-e_1=-e_6 \quad J_-e_2=e_1-e_5 \quad J_-e_3=-e_4 \quad J_-e_4=e_3 \quad J_-e_5=e_2-e_6 \quad J_-e_6=e_1}
\]
is not integrable. Moreover, the almost bi-Hermitian metric is
$$
g = \frac{1}{2}(\Omega J_+ -J_+^*\Omega) = \left (
\begin{array}{rrrrrr}
1&0&0&0&-\frac12&0\\
0&1&0&0&0&-\frac12\\
0&0&1&0&0&0\\
0&0&0&1&0&0\\
-\frac12&0&0&0&1&0\\
0&-\frac12&0&0&0&1
\end{array}
\right )
 $$
with fundamental 2-forms
\[
\omega_+ = e^{12}+e^{34}+e^{56}+\frac12(e^{16}-e^{25}) \qquad {\omega_- = e^{34}+\frac 12(e^{12}+e^{56}+e^{25})},
\]
and
\[
J_+d\omega_+ = \frac12\, e^{256},\qquad J_-d\omega_- = -\frac12\, e^{126}.
\]
Evaluating $\tilde Q=\frac 12 [J_+,J_-]g^{-1}=(J_+-J_-)\Omega^{-1}$ we obtain that
$$
\tilde Qe^1=-e_5, \quad \tilde Qe^2=e_6, \quad \tilde Qe^3=0, \quad \tilde Qe^4=0, \quad \tilde Qe^5= e_1, \quad \tilde Qe^6=-e_2,
$$
considering $\tilde Q$ as a morphism $T^* M \to TM$.
$\tilde Q$ is skew-symmetric, so we can see it as $\tilde Q=e_{51}+e_{26}\in\Lambda^2TM$; moreover $\tilde Q(J\alpha,J\beta)=-\tilde Q(\alpha,\beta)$, so $\tilde Q\in\Lambda^2T^{1,0}\oplus \Lambda^2T^{0,1}$.
We want to know if the bivector $\tilde Q$ is a Poisson structure, i.e. $[\tilde Q,\tilde Q]_s=0$. We compute
$$
[\tilde Q,\tilde Q]_s=2\,e_{126}
$$
so $Q$ is not a Poisson structure. However, $\tilde Q$ is a twisted Poisson structure, indeed
\[
e_{126} = -\Lambda^3 \tilde Q^\# (e^{256}),
\]
and $de^{256}=0$. Indeed, since $Q$ is given by
\[
Qe_1=-e_1+2e_5 \quad Qe_2=e_2-2e_6 \quad Qe_3=0 \quad Qe_4=0 \quad Qe_5=-2e_1+e_5 \quad Qe_6=2e_2-e_6,
\]
$Im(Q)=\langle e_1,e_2,e_5,e_6\rangle$, and $[X,Y]\in \langle e_1,e_2\rangle\subset Im(Q)$ for any $X,Y\in Im(Q)$.

To conclude the example, we prove that the $(2,0)$-part of $\tilde Q$ is not holomorphic. Indeed, by computing $\psi$ we obtain
\[
  \psi = e^5\otimes (e^{12}-e^{25}+e^{56})+e^6\otimes (e^{26}-e^{15})
\]
so the $(3,0)$-part of $\psi$ becomes
\[
  \psi^{3,0}= 3 (e^6+ie^5)\otimes(e^{26}-e^{15}+ie^{25}+ie^{16}),
\]
that is not zero, so  the $(2,0)$-part  $\tilde{Q}^{(2,0)}$ of the bivector  $\tilde Q$  is   not  holomorphic.

\end{ex}

\vspace{5mm}

\subsection{Complex surfaces}
Here we consider the case of 4-dimensional manifolds. When $(M,J)$ is a complex surface we have some additional properties. For instance, we have no $(3,0)$ or $(0,3)$-forms, and if $M$ is compact, we know that if $\Omega$ is a symplectic form taming $J$ then $J$ admits a K\"ahler metric \cite{LiZhang}. Thanks to these peculiarities, we prove stronger versions of Propositions \ref{prop:twistFoliation} and \ref{prop:BracExplicit}.

\begin{prop}
Let $(M,J)$ be a complex surface and $\Omega$ a symplectic form that tames $J$. Then the distribution $ImQ$ induced by $Q=[J_+,J_-]$ is involutive.
\end{prop}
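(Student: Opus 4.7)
The plan is to exploit the fact that in complex dimension $2$, the bundle $\Lambda^{2,0}(T^*M)$ is a complex \emph{line} bundle, which forces a sharp rank dichotomy on $Q$: at every point, $Q$ either vanishes or is bijective. Once this is established, involutivity of $Im(Q)$ becomes essentially automatic.

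First, I would recall that $Q=[J_+,J_-]$ anti-commutes with $J_+$, so the associated $2$-form $S(X,Y)=g(QX,Y)$ is of type $(2,0)+(0,2)$ with respect to $J_+$. In complex dimension $2$, any nonzero element of $\Lambda^2 V^*$ with $\dim_\C V=2$ is non-degenerate, so at every point $p$ the form $S^{2,0}_p$ is either zero or a non-degenerate skew pairing on $T^{1,0}_p$.

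Next, I would translate this dichotomy back to $Q$ itself. Using that the Hermitian metric pairs $T^{1,0}_p$ with $T^{0,1}_p$ non-degenerately, $S^{2,0}_p$ being non-degenerate is equivalent to the map $Q\colon T^{1,0}_p\to T^{0,1}_p$ being injective, hence bijective by equality of dimensions; by complex conjugation $Q_p$ is then bijective on $T_pM$, so $Im(Q)_p = T_pM$. If instead $S^{2,0}_p=0$, the same non-degeneracy of $g$ forces $Q|_{T^{1,0}_p}=0$, and by conjugation $Q_p=0$, so $Im(Q)_p=0$. Pointwise, therefore, $Im(Q)_p$ is either all of $T_pM$ or $\{0\}$.

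Finally, involutivity follows at once. On the open set $U=\{Q\neq 0\}$ the distribution coincides with $TM$ and the Lie bracket of any two vector fields lies in $TM=Im(Q)$ trivially. On the closed complement $Z=\{Q=0\}$, any smooth section of $Im(Q)$ must vanish at every point of $Z$; since the Lie bracket of two vector fields that simultaneously vanish at a point also vanishes there (immediate from the coordinate formula $[X,Y]^i=X^j\partial_jY^i-Y^j\partial_jX^i$), we get $[X,Y]_p=0\in Im(Q)_p$ for all $p\in Z$. There is no substantive obstacle here; the argument rests entirely on the dimensional fact that in $\dim_\C V=2$ any nonzero $(2,0)$-form is already symplectic, so the rank of $Q$ cannot take the intermediate value $2$.
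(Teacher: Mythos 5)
Your proof is correct and follows essentially the same route as the paper: the key point in both is that $S=g(Q\,\cdot,\cdot)$ has type $(2,0)+(0,2)$ with respect to $J_+$, so in complex dimension $2$ it is pointwise either zero or non-degenerate, forcing $Im(Q)_p$ to be either $\{0\}$ or all of $T_pM$. You additionally spell out why this rank dichotomy yields involutivity (any section of $Im(Q)$ vanishes on the set $\{Q=0\}$, hence so does the bracket of two such sections), a step the paper leaves implicit.
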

\begin{proof}
Note that $rank(ImQ_p)=rank(\tilde Q_p)$ for every $p\in M$, and $\tilde Q$ is a $(2,0)+(0,2)$ bivector. Since $\dim M=4$, the space of $(2,0)+(0,2)$ bivectors is locally generated by
\[
\left( \frac{\partial}{\partial x^1}\wedge \frac{\partial}{\partial x^3}-\frac{\partial}{\partial x^2}\wedge \frac{\partial}{\partial x^2},\frac{\partial}{\partial x^1}\wedge \frac{\partial}{\partial x^4}+\frac{\partial}{\partial x^2}\wedge \frac{\partial}{\partial x^3} \right),
\]
where $(\frac{\partial}{\partial x^i})$ is a local frame of $TM$ such that $J\frac{\partial}{\partial x^1}=\frac{\partial}{\partial x^2}$ and $J\frac{\partial}{\partial x^3}=\frac{\partial}{\partial x^4}$. So for every point either $\tilde Q$ is zero or it is nondegenerate, i.e. $rank(\tilde Q)=4$, and the same holds for $ImQ$.
\end{proof}
Thus on a 4-dimensional manifold $ImQ_p$ either coincide with $T_pM$ or it is $\{ 0 \}$. Since $ImQ$ is a distribution locally of finite type, the rank is a lower semicontinous function, i.e. for every point we have an open neighbourhood where the rank cannot decrease. Moreover, by Proposition \ref{propInt} $ImQ$ is integrable, so the leafs of the foliation are either
\begin{itemize}
 \item $M$ except some points $p_i$;

 or

 \item the points $p_i$.
\end{itemize}

\begin{prop} Let $(M,J)$ be a complex surface and $\Omega$ a symplectic form that tames $J$, then
\begin{align}\label{eq:bracDim4}
\notag \#^{-1}_3[\tilde Q&,\tilde Q](X,Y,Z)=  \\
 =&\Big(\sigma_{XYZ} \big(-N_- (QX, J_- Y, J_+ Z) - N_- (QX, J_+ Y, J_- Z) - (J_+ d \omega_+ + J_- d \omega_-) (QX, Y, QZ) \big)\Big).
\end{align}
\end{prop}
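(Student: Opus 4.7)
The plan is to derive \eqref{eq:bracDim4} as a specialization of Proposition \ref{prop:BracExplicit} to dimension four. Two dimension-specific simplifications are available on a complex surface, and together they condense the right-hand side of Proposition \ref{prop:BracExplicit} into the more compact form in \eqref{eq:bracDim4}.

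The first simplification is that the totally skew-symmetric part $\b N_-$ of the Nijenhuis tensor of $J_-$ vanishes on any $4$-manifold. Using the standard relation $N_-(J_-Y, Z) = -J_- N_-(Y,Z)$ together with the $J_-$-Hermitian property of $g$, one computes $N_-(J_-X, J_-Y, Z) = -N_-(X, Y, Z)$, and carrying out the analogous computation in the remaining two slots of $\b N_-$ gives $\b N_-(J_-X, J_-Y, Z) = -\b N_-(X, Y, Z)$. This identifies $\b N_-$ as a real $3$-form of type $(3,0)+(0,3)$ with respect to $J_-$; since $\Lambda^{3,0}_{J_-}$ is trivial on a $4$-manifold, $\b N_- \equiv 0$. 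This at once collapses the $(N_- + 3\b N_-)$-terms of Proposition \ref{prop:BracExplicit} into the first two $N_-$-terms of \eqref{eq:bracDim4}.

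The second simplification is that both $d\omega_+$ and $d\omega_-$ are automatically of type $(2,1)+(1,2)$ with respect to $J_+$ and $J_-$ respectively, for the same dimensional reason. This yields the real identities
\[
d\omega_\pm(A,B,C) = d\omega_\pm(J_\pm A, J_\pm B, C) + d\omega_\pm(J_\pm A, B, J_\pm C) + d\omega_\pm(A, J_\pm B, J_\pm C),
\]
and, after applying $J_\pm$ to all three arguments,
\[
J_\pm d\omega_\pm(A,B,C) = d\omega_\pm(J_\pm A, B, C) + d\omega_\pm(A, J_\pm B, C) + d\omega_\pm(A, B, J_\pm C).
\]

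Using these identities, the anticommutation relation $Q J_\pm = -J_\pm Q$ (immediate from $Q = [J_+, J_-]$ and $J_\pm^2 = -I$), the total antisymmetry of the $3$-forms, and the invariance of the cyclic sum $\sigma_{XYZ}$ under relabelling of $(X,Y,Z)$, the eight mixed $J_\pm d\omega_\pm$-terms of Proposition \ref{prop:BracExplicit} condense after cyclic summation into the single expression $-\sigma_{XYZ}[(J_+d\omega_+ + J_-d\omega_-)(QX, Y, QZ)]$, where the decomposition $QZ = J_+J_-Z - J_-J_+Z$ accounts for the splitting of the $Q$-argument in the last slot. The main obstacle is precisely this combinatorial rearrangement: the algebraic inputs are elementary, but the bookkeeping of which $J_\pm$ acts on which argument, together with the sign changes coming from $Q J_\pm = -J_\pm Q$ and the relabellings permitted by the cyclic sum, is the real work of the proof.
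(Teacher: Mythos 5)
Your proposal is correct and takes essentially the same route as the paper: specialize Proposition \ref{prop:BracExplicit} to dimension four, observe that $\b N_-$ vanishes because it is a $(3,0)+(0,3)$-form, and condense the eight $J_\pm d\omega_\pm$-terms via the type-$(2,1)+(1,2)$ identities, leaving the final bookkeeping (which the paper also only summarizes, in the form of the two intermediate identities $\sigma_{XYZ}[\cdots]=\pm\sigma_{XYZ}\,J_\pm d\omega_\pm(QX,Y,QZ)$) as a ``direct computation''. One correction: the condensation needs $J_+d\omega_+$ (resp.\ $J_-d\omega_-$) to be of type $(2,1)+(1,2)$ with respect to \emph{both} $J_+$ and $J_-$, not merely ``respectively'', since $J_-$ acts on arguments of $J_+d\omega_+$ in the terms being combined; your dimensional argument does deliver this stronger statement, and the paper invokes it explicitly.
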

\begin{proof} In dimension $4$ we have that any $3$-form is of type $(1,2) + (2,1)$ with respect to $J_+$ and $J_-$, thus
\[
\sigma_{XYZ} J_+ d \omega_+ (A, B, C) = \sigma_{X, Y, Z} J_+ d \omega_+ (J_+ A,  J_+ B, C) = \sigma_{XYZ} J_+ d \omega_+ (J_- A,  J_- B, C).
\]
and $\b N_-=0$. Therefore, by a direct computation we obtain
\[
\begin{array}{l}
\sigma_{XYZ} [-J_+d\omega_+(QX,J_-Y,J_+Z) - J_+d\omega_+(QX,J_+Y,J_-Z)  - J_+d\omega_+(QX,J_+ J_- Y, Z)\\
 - J_+d\omega_+(QX, Y, J_+ J_- Z) = \sigma_{XYZ}J_+d\omega_+ (Q X, Y, QZ).
\end{array}
\]
Similarly
\[
\begin{array}{l}
\sigma_{XYZ} [J_-d\omega_-(QX,J_-Y,J_+Z) + J_-d\omega_-(QX,J_+Y,J_-Z)  + J_-d\omega_-(QX,J_- J_+Y, Z) +\\
 J_-d\omega_-(QX, Y, J_- J_+ Z) = -\sigma_{XYZ}J_-d\omega_- (Q X, QY, Z) =  \sigma_{XYZ}J_-d\omega_- (Q X, Y, QZ),
\end{array}
\]
and \eqref{eq:bracDim4} follows.
\end{proof}

Finally, as a consequence of Theorem \ref{condhol(2,0)} we have the following
\begin{cor}  Let $(M,J)$ be a complex surface and $\Omega$ a symplectic form that tames $J$, then the $(2,0)$-part  $\tilde{Q}^{(2,0)}$ of the bivector  $\tilde Q$  is holomorphic. \end{cor}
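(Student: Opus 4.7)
The plan is to invoke Theorem \ref{condhol(2,0)} and exploit a purely dimensional obstruction. Recall that the theorem reduces holomorphicity of $\tilde Q^{(2,0)}$ to the vanishing of $\psi^{(3,0)}$, where $\psi(X,Y,Z)=\Omega(X,N_-(Y,Z))$ is a $3$-tensor on $M$ and the type decomposition is taken with respect to $J_+=J$.

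On a complex surface, however, $\dim_{\C}(M,J)=2$, and so the bundle $\Lambda^{3,0}_{J_+}T^*M$ is the zero bundle: any $(3,0)$-form with respect to $J_+$ must vanish identically for dimension reasons (one cannot pick three linearly independent $(1,0)$-covectors out of a rank-$2$ complex space). In particular the projection onto the $(3,0)$-component of any $3$-tensor (or of its skew-symmetrization) is automatically zero.

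Applying this to $\psi$, we conclude $\psi^{(3,0)}=0$ trivially. By Theorem \ref{condhol(2,0)} this gives that $\tilde Q^{(2,0)}$ is holomorphic with respect to $J_+$, which is the desired conclusion.

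The only thing to be careful about is that $\psi$ is a genuine $3$-tensor rather than an honest $3$-form, so a priori one might worry about the meaning of the $(3,0)$-component; but the explicit formula for $\psi^{(3,0)}$ given in the remark after Theorem \ref{condhol(2,0)} shows that only the values of $\psi$ on triples of $(1,0)$-vectors survive, and these vanish for the dimensional reason above. There is no real obstacle here; the corollary is essentially immediate once one recognizes that $\Lambda^{3,0}_{J_+}=0$ on a complex surface.
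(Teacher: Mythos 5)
Your reduction to Theorem \ref{condhol(2,0)} is the route the paper itself intends, but the key step --- ``$\psi^{(3,0)}=0$ on a surface because $\Lambda^{3,0}=0$'' --- does not go through, and in your final paragraph you put your finger on exactly the problem before dismissing it too quickly. The tensor $\psi(X,Y,Z)=\Omega(X,N_-(Y,Z))$ is skew only in $(Y,Z)$, and the component $\psi^{(3,0)}$ used in Theorem \ref{condhol(2,0)} and in the remark following it is the \emph{restriction} of $\psi$ to triples of $(1,0)$-vectors, not its total alternation. It is therefore a section of $\Lambda^{1,0}\otimes\Lambda^{2,0}$, which on a complex surface has rank $2\cdot 1=2$, rather than of $\Lambda^{3,0}$, which is indeed zero. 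Concretely, if $Z_1,Z_2$ is a frame of $T^{1,0}_{J_+}$, then $\psi^{(3,0)}$ is encoded by the two numbers $\Omega(Z_1,N_-(Z_1,Z_2))$ and $\Omega(Z_2,N_-(Z_1,Z_2))$; the observation that one cannot choose three independent $(1,0)$-covectors only kills the fully skew-symmetrized part and says nothing about these two numbers.

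The gap is not cosmetic. In the paper's own Example \ref{ESEMPIOdim4} (the hyperelliptic surface), with $Z_1=e_1+ie_2$ and $Z_2=e_3+ie_4$, the structure equations give $N_-(Z_1,Z_2)=e_1+e_4+2ie_3$, hence $\Omega(Z_2,N_-(Z_1,Z_2))=3\neq 0$ and $\psi^{(3,0)}\neq 0$ there; consistently, $\tilde Q^{(2,0)}$ is a nonzero constant multiple of $Z_1\wedge Z_2$ while $[\overline{Z}_2,Z_1]^{(1,0)}=Z_1\neq 0$, so $\bar\partial(Z_1\wedge Z_2)\neq 0$ and $\tilde Q^{(2,0)}$ is not holomorphic. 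More structurally, $\psi^{(3,0)}=0$ forces $N_-$ to vanish on the $T^{1,0}_{J_-}$-components of $Z_1,Z_2$, which (whenever the projection $T^{1,0}_{J_+}\to T^{1,0}_{J_-}$ is an isomorphism) is equivalent to the integrability of $J_-$. So the argument cannot be repaired by more careful bookkeeping of types: it coincides with the paper's implicit one, and as it stands the corollary conflicts with Theorem \ref{condhol(2,0)} applied to Example \ref{ESEMPIOdim4}; either an additional hypothesis (such as the integrability of $J_-$) or a genuinely different proof is needed.
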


\end{document}